\begin{document}

\title{SOME PROPERTIES OF BOUNDED TRI-LINEAR MAPS
}
\subtitle{Do you have a subtitle?\\ If so, write it here}


\author{Abotaleb Sheikhali \and Ali Ebadian\and Kazem Haghnejad Azar 
}


\institute{A. Sheikhali \at
             Department of Mathematics, Payame Noor University (PNU), Tehran, Iran.\\
              \email{Abotaleb.sheikhali.20@gmail.com}           
                                   \and
           A. Ebadian \at
              \email{Ebadian.ali@gmail.com}
               \and
           K. Haghnejad Azar \at
              \email{Haghnejad@aut.ac.ir}
}

\date{Received: date / Accepted: date}

\maketitle

\begin{abstract}
Let $X,Y,Z$ and $W$ be normed spaces and $f:X\times Y\times Z\longrightarrow W $ be a bounded tri-linear mapping. In this Article, we define the topological centers for bounded tri-linear mapping and we invistagate thier properties. We study the relationships between weakly compactenss of bounded linear mappings and regularity of bounded tri-linear mappings. For both bounded tri-linear mappings   $f$ and $g$, let $f$ factors through $g$, we present necessary and suficient condition such that the extensions of $f$ factors through extensions of $g$. Also we establish relations between  regularity and factorization  property of bounded tri-linear mappings.
\keywords{Arens product\and Module action\and Factors \and Topological center\and Tri-linear mappings.}
 \subclass{MSC 46H25 \and MSC 46H20\and MSC 46L06}
\end{abstract}

\section{Introduction}
\label{intro}
Let $X, Y, Z$ and $W$ be normed spaces and $f:X\times Y\times Z\longrightarrow W $ be a bounded tri-linear mapping. The natural extensions  of $f$ are  as following:
\begin{enumerate}
\item $f^{*}:W^{*}\times X\times Y\longrightarrow Z^{*}$, given by $\langle f^{*}(w^{*},x,y),z\rangle=\langle w^{*},f(x,y,z)\rangle$ where $x\in X, y\in Y, z\in Z, w^{*}\in W^{*}$. 

The map $f^*$ is a bounded tri-linear mapping and is said  the adjoint of $f$.

\item $f^{**}=(f^*)^*:Z^{**}\times W^{*}\times X\longrightarrow Y^{*}$, given by $\langle f^{**}(z^{**},w^{*},x),y\rangle=\langle z^{**}, f^{*}(w^{*},x,y)$ where $x\in X, y\in Y, z^{**}\in Z^{**}, w^{*}\in W^{*}$.

\item $f^{***}=(f^{**})^*:Y^{**}\times Z^{**}\times W^{*}\longrightarrow X^{*}$, given by $\langle f^{***}(y^{**},z^{**},w^{*}),x\rangle=\langle y^{**},  f^{**}(z^{**},w^{*},x) \rangle$ where $x\in X, y^{**}\in Y^{**}, z^{**}\in Z^{**}, w^{*}\in W^{*}$.

\item $f^{****}=(f^{***})^*:X^{**}\times Y^{**}\times Z^{**}\longrightarrow W^{**}$, given by $\langle f^{****}(x^{**},
y^{**},z^{**})$, $w^{*}\rangle =\langle x^{**}, f^{***}(y^{**},z^{**},w^{*}) \rangle$ where $x^{**}\in X^{**}, y^{**}\in Y^{**}, z^{**}\in Z^{**}, w^{*}\in W^{*}$.
\end{enumerate}
Now let $f^r:Z\times Y\times X\longrightarrow W$ be the flip of $f$ defined by $f^r(z,y,x)=f(x,y,z)$, for every $x\in X ,y\in Y$ and $z\in Z$. Then $f^r$ is a bounded tri-linear map and  it may  extends as above to $f^{r****}:Z^{**}\times Y^{**}\times X^{**}\longrightarrow W^{**}$. When $f^{****}$ and $f^{r****r}$ are equal, then $f$  is called  regular. 
Regularity of $f$ is equvalent to the following
$$w^{*}-\lim\limits_\alpha w^{*}-\lim\limits_\beta w^{*}-\lim\limits_\gamma f(x_\alpha,y_\beta,z_{\gamma})=w^{*}-\lim\limits_\gamma w^{*}-\lim\limits_\beta w^{*}-\lim\limits_\alpha f(x_\alpha,y_\beta,z_{\gamma}),$$
where $\{x_{\alpha} \}, \{y_{\beta} \}$ and $\{z_{\gamma} \}$ are nets in $X, Y$ and $Z$  which converge to $x^{**}\in X^{**},y^{**}\in Y^{**}$ and $z^{**}\in Z^{**}$  in the $w^{*}-$topologies, respectively. For a bounded tri-linear map $f:X\times Y\times Z\longrightarrow W$, if from $X, Y$ or $Z$  at least two reflexive then f is regular. 

A bounded bilinear(res tri-linear) mapping $m:X\times Y\longrightarrow Z$(res $f:X\times Y\times Z\longrightarrow W $) is said to be factor if it is surjective, that is $f(X\times Y)= Z$(res $ f(X\times Y\times Z) =W) $. For a good source of information on this subject, we refer the reader to \cite{5}. 

For a discussion of Arens regularity for Banach algebras and bounded bilinear maps, see \cite{1}, \cite{2}, \cite{11}, \cite{12} and \cite{18}. For example, every $C^{*}-$algebra is Arens regular, see \cite{4}. Also $L^{1}(G)$ is Arens regular if and only if G is finite,\cite{19}.\\
The left topological center of $m$ may be defined as following

$Z_{l}(m) = \{x^{**} \in X^{**} : y^{**} \longrightarrow m^{***}(x^{**}, y^{**}′)\ is\ weak^{*}-to-weak^{*}-continuous\}.$

Also the right topological center of $m$ as

$Z_{r}(m) = \{y^{**} \in Y^{**} : x^{**} \longrightarrow m^{r***r}(x^{**}, y^{**}′)\ is\ weak^{*}-to-weak^{*}-continuous\}.$

The subject of topological centers have been investigated in \cite{6}, \cite{7} and \cite{15}.
In \cite{13}, Lau and Ulger gave several significant results related to the topological centers of certain dual algebras. In \cite{11}, Authors extend some problems from Arens regularity and  Banach algebras to module actions. They also extend the definitions of the left and right multiplier for module actions, see \cite{10} and \cite{12}.

Let $A$ be a Banach algebra, and let $\pi :A\times A\longrightarrow A $ denote the product of A,
so that $\pi (a,b) = ab$ for every $a,b\in A$. The Banach algebra $A$ is Arens regular whenever the map $\pi$ is Arens
regular. The first and second Arens products denoted by $\square$ and $\lozenge$ respectively and definded by 

$a^{**}\square b^{**}=\pi^{***}(a^{**},b^{**})\ \ ,\ \ a^{**}\lozenge b^{**}=\pi^{r***r}(a^{**},b^{**})\ \ \ ,\ \ \ (a^{**},b^{**}\in A^{**}).$

\section{\textbf{Module actions for bounded tri-linear maps}}
\label{sec:2}
Let $(\pi_{1},X,\pi_{2})$ be a Banach $A-$module and let $\pi_{1}:A\times X \longrightarrow X $ and $\pi_{2}:X\times A \longrightarrow X$ be the left and right module actions of $A$ on $X$, respectively. If $(\pi_{1},X)$ (res $(X,\pi_{2})$)  is a left (res right) Banach $A-$module of $A$ on $X$, then $(X^{*},\pi_{1}^{*})$(res $(\pi_{2}^{r*r},X^{*})$) is a right (res left) Banach $A-$module and $(\pi_{2}^{r*r},X^{*},\pi_{1}^{*})$ is the dual Banach $A-$module of $(\pi_{1},X,\pi_{2})$. We note also that $(\pi_{1}^{***},X^{**},\pi_{2}^{***})$ is a Banach $(A^{**},\square)-$module with module actions $\pi_{1}^{***}:A^{**}\times X^{**} \longrightarrow X^{**}$ and $\pi_{2}^{***}:X^{**}\times A^{**} \longrightarrow X^{**}$. Similary, $(\pi_{1}^{r***r},X^{**},\pi_{2}^{r***r})$ is a Banach $(A^{**},\lozenge)-$module with module actions $\pi_{1}^{r***r}:A^{**}\times X^{**} \longrightarrow X^{**}$ and $\pi_{2}^{r***r}:X^{**}\times A^{**} \longrightarrow X^{**}$.
If we continue dualizing we shall reach $(\pi_{2}^{***r*r},X^{***},\pi_{1}^{****})$ and $(\pi_{2}^{r****r},X^{***},\pi_{1}^{r***r*})$ are the dual Banach $(A^{**},\square)$
$-$ module and Banach $(A^{**},\lozenge)-$module of $(\pi_{1}^{***},X^{**}$
$,\pi_{2}^{***})$ and $(\pi_{1}^{r***r}$, $X^{**},\pi_{2}^{r***r})$, respectively. In \cite{8}, Eshaghi Gordji and Fillali show that  if  a Banach algebra $A$  has a bounded left (or right) approximate identity, then the left (or right) module action of $A$ on $A^{*}$ is Arens regular if and only if $A$ is reflexive. 

We commence with the following definition for bounded tri-linear mapping.
\begin{definition}\label{2.1}
Let $X$  be a Banach space, $A$  be a Banach algebra and let $\Omega_{1}:A\times A\times X \longrightarrow X$ be a bounded tri-linear map. Then the pair $(\Omega_{1},X)$ is said to be a left Banach $A-$module when $$\Omega_{1}(\pi(a,b),\pi(c,d),x)=\Omega_{1}(a,b,\Omega_{1}(c,d,x)),$$ for each $a,b,c,d\in A$ and $x\in X$.  A right Banach $A-$module may be defined similarly. Let  $\Omega_{2}:X\times A\times A  \longrightarrow X$ be a bounded tri-linear map. Then the pair $(X,\Omega_{2})$ is said to be a right Banach $A-$module when $$\Omega_{2}(x,\pi(a,b),\pi(c,d))=\Omega_{2}(\Omega_{2}(x,a,b),c,d).$$ A triple $(\Omega_{1},X,\Omega_{2})$ is said to be a Banach $A-$module when  $(\Omega_{1},X)$ and $(X,\Omega_{2})$ are left and right Banach $A-$modules respectively, also $$\Omega_{2}(\Omega_1(a,b,x),c,d)=\Omega_{1}(a,b,\Omega_{2}(x,c,d)).$$
\end{definition}
If $(\Omega_{1},X,\Omega_{2})$ is a Banach $A-$module, then $(\Omega_{2}^{r*r},X^{*},\Omega_{1}^{*})$ is a Banach $A-$module.
It follows that, 

\begin{enumerate}
\item the triple $(\Omega_{1}^{****},X^{**},\Omega_{2}^{****})$ is a Banach $(A^{**},\square,\square)-$module. 

\item the triple $(\Omega_{1}^{r****r},X^{**},\Omega_{2}^{r****r})$ is a Banach $(A^{**},\lozenge,\lozenge)-$module.
\end{enumerate}

\begin{theorem}\label{2.4}
Let $a,b,c,d\in A$, $x^{*}\in X^{*}$, $x^{**}\in X^{**}$ and $b^{**},c^{**}\in A^{**}$.Then
\begin{enumerate}
\item If $(\Omega_{1},X)$ is a left Banach $A-$module, then
 $$\Omega_{1}^{***}(b^{**},\Omega_{1}^{****}(c,d,x^{**}),x^{*})=\pi^{**}(b^{**},\Omega_{1}^{***}(\pi^{***}(c,d),x^{**},x^{*})),$$
\item If $(X,\Omega_{2})$ is a right Banach $A-$module, then 
$$\Omega_{2}^{r***r}(x^{*},\Omega_{2}^{r****r}(x^{**},a,b),c^{**})=\pi^{r**}(c^{**},\Omega_{2}^{r***r}(x^{*},x^{**},\pi^{***}(a,b)).$$
\end{enumerate}
\end{theorem}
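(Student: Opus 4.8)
\noindent\emph{Proof idea.} Both displayed formulas are equalities between two members of $A^{*}$, so the plan is to prove each by showing the two sides take the same value at an arbitrary $a^{**}\in A^{**}$; pairing against the larger space (rather than merely against $a\in A$) is convenient because it lets the module structure on the second duals be used directly. Apart from the defining relations of the iterated adjoints of $\Omega_{1}$, $\Omega_{2}$ and $\pi$, the only ingredient I intend to invoke is the remark recorded just before the statement: $(\Omega_{1}^{****},X^{**},\Omega_{2}^{****})$ is a Banach $(A^{**},\square,\square)$-module and $(\Omega_{1}^{r****r},X^{**},\Omega_{2}^{r****r})$ is a Banach $(A^{**},\lozenge,\lozenge)$-module. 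By Definition~\ref{2.1}, the first of these says in particular that $\Omega_{1}^{****}(a^{**},b^{**},\Omega_{1}^{****}(c^{**},d^{**},x^{**}))=\Omega_{1}^{****}(\pi^{***}(a^{**},b^{**}),\pi^{***}(c^{**},d^{**}),x^{**})$, and the second gives the analogous right-module associativity for $\Omega_{2}^{r****r}$ over $(A^{**},\lozenge)$. I also note that $\pi^{***}(c,d)=cd=\pi^{r***r}(c,d)$ whenever $c,d\in A$, so it is immaterial which of the two second-dual module structures is used at the key moment.

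For part (1), I would fix $a^{**}\in A^{**}$ and pair it against the left-hand side. Using the identity $\langle\Omega_{1}^{****}(p^{**},q^{**},v^{**}),v^{*}\rangle=\langle p^{**},\Omega_{1}^{***}(q^{**},v^{**},v^{*})\rangle$ (which is simply the definition of $\Omega_{1}^{****}=(\Omega_{1}^{***})^{*}$) with $v^{**}=\Omega_{1}^{****}(c,d,x^{**})$ and $v^{*}=x^{*}$, that value becomes $\langle\Omega_{1}^{****}(a^{**},b^{**},\Omega_{1}^{****}(c,d,x^{**})),x^{*}\rangle$. Next, the left-module associativity recalled above collapses the inner action to give $\langle\Omega_{1}^{****}(\pi^{***}(a^{**},b^{**}),\pi^{***}(c,d),x^{**}),x^{*}\rangle$. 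Finally, applying the same adjoint identity once more and then the definition $\pi^{***}=(\pi^{**})^{*}$ rewrites this as $\langle a^{**},\pi^{**}(b^{**},\Omega_{1}^{***}(\pi^{***}(c,d),x^{**},x^{*}))\rangle$, which is $a^{**}$ paired against the right-hand side. Since $a^{**}$ is arbitrary, part (1) follows.

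For part (2) I would run the mirror argument with the flipped extensions and the second Arens product. First I would record the flipped adjoint identity $\langle\Omega_{2}^{r****r}(v^{**},q^{**},p^{**}),x^{*}\rangle=\langle p^{**},\Omega_{2}^{r***r}(x^{*},v^{**},q^{**})\rangle$, obtained by writing out the two flips together with the definition $\Omega_{2}^{r****}=(\Omega_{2}^{r***})^{*}$. Then, pairing the left-hand side of (2) against $p^{**}\in A^{**}$ and using this identity with $v^{**}=\Omega_{2}^{r****r}(x^{**},a,b)$, $q^{**}=c^{**}$, the value becomes $\langle\Omega_{2}^{r****r}(\Omega_{2}^{r****r}(x^{**},a,b),c^{**},p^{**}),x^{*}\rangle$. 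The right-module associativity then collapses the iterated action to $\langle\Omega_{2}^{r****r}(x^{**},\pi^{***}(a,b),\pi^{r***}(p^{**},c^{**})),x^{*}\rangle$ (here $ab=\pi^{***}(a,b)$ and $c^{**}\lozenge p^{**}=\pi^{r***}(p^{**},c^{**})$). Applying the flipped adjoint identity once more, followed by the definition $\pi^{r***}=(\pi^{r**})^{*}$, turns this into $\langle p^{**},\pi^{r**}(c^{**},\Omega_{2}^{r***r}(x^{*},x^{**},\pi^{***}(a,b)))\rangle$, which is $p^{**}$ paired against the right-hand side.

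I do not expect a genuine obstacle here: the proof is a chain of ``unwind a definition, apply an axiom, unwind a definition'' steps, and the only thing needing care is the bookkeeping of argument positions through four layers of adjoints and the flips, together with the harmless observation that it does not matter whether $\square$ or $\lozenge$ is invoked at the crucial moment (the inner algebra entries already lying in $A$). Should one prefer not to cite the second-dual module structure, the same identities fall out by expanding both sides into iterated $w^{*}$-limits and applying the ground-level associativity $\Omega_{1}(a,b,\Omega_{1}(c,d,x))=\Omega_{1}(ab,cd,x)$ --- and its right-module analogue --- at the innermost limit; that route is longer but introduces no new difficulty.
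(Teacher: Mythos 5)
Your argument is correct in its bookkeeping --- the adjoint identities you invoke are the right ones, and both chains do land on the stated right-hand sides --- but it takes a genuinely different route from the paper. The paper never touches the second-dual module structure: it works bottom-up, first deriving $\Omega_{1}^{*}(x^{*},\pi(a,b),\pi(c,d))=\Omega_{1}^{*}(\Omega_{1}^{*}(x^{*},a,b),c,d)$ from the ground-level axiom, then an intermediate identity $\pi^{*}(\Omega_{1}^{***}(\pi^{***}(c,d),x^{**},x^{*}),a)=\Omega_{1}^{**}(\Omega_{1}^{****}(c,d,x^{**}),x^{*},a)$ by testing against elements of $A$, and finally pairing the two sides of the claim with $a\in A$ (which suffices, both sides lying in $A^{*}$) rather than with $a^{**}\in A^{**}$. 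You instead read the theorem off as the special case, with inner algebra entries in $A$, of the assertion made just before the statement that $(\Omega_{1}^{****},X^{**},\Omega_{2}^{****})$ is a Banach $(A^{**},\square,\square)$-module. That assertion is stated in the paper without proof, and once you pair your identity against $a^{**}$ the theorem is essentially a paired-down instance of it; so if the paper intends Theorem~\ref{2.4} as (part of) the justification of that very assertion, your main route would be circular. Your fallback via iterated $w^{*}$-limits repairs this, though it needs one more word of care than ``no new difficulty'': $\Omega_{1}^{****}$ is weak$^{*}$-continuous in its third variable only when the first two arguments lie in the base spaces, which is exactly the case $c,d\in A$ you need in order to replace $\Omega_{1}^{****}(c,d,x^{**})$ by the net $\Omega_{1}(c,d,x_{\gamma})$ before invoking ground-level associativity. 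In exchange, your approach is shorter and makes the conceptual content transparent (the theorem is a fragment of second-dual associativity), while the paper's computation is self-contained and uses nothing beyond the definitions of the adjoints.
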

\begin{proof}
(1) Since the pair $(\Omega_{1},X)$ is a  left Banach $A-$module, thus for every $x\in X$ we have 
\begin{eqnarray*}
\langle\Omega_{1}^{*}(x^{*},&\pi&(a,b),\pi(c,d)),x \rangle =\langle x^{*},\Omega_{1}(\pi(a,b),\pi(c,d),x)\rangle \\
&=&\langle x^{*},\Omega_{1}(a,b,\Omega_{1}(c,d,x))\rangle=\langle\Omega_{1}^{*}(x^{*},a,b),\Omega_{1}(c,d,x) \rangle \\
&=&\langle \Omega_{1}^{*}(\Omega_{1}^{*}(x^{*},a,b),c,d),x\rangle.
\end{eqnarray*}
Hence $\Omega_{1}^{*}(x^{*},\pi(a,b),\pi(c,d))=\Omega_{1}^{*}(\Omega_{1}^{*}(x^{*},a,b),c,d)$ and this implies that 
\begin{eqnarray*}
\langle &\pi^{*}&(\Omega_{1}^{***}(\pi^{***}(c,d),x^{**},x^{*}),a),b\rangle=\langle \Omega_{1}^{***}(\pi^{***}(c,d),x^{**},x^{*}),\pi(a,b)\rangle \\
&=&\langle \pi^{***}(c,d),\Omega_{1}^{**}(x^{**},x^{*},\pi(a,b))\rangle=\langle c,\pi^{**}(d,\Omega_{1}^{**}(x^{**},x^{*},\pi(a,b)))\rangle \\
&=&\langle d,\pi^{*}(\Omega_{1}^{**}(x^{**},x^{*},\pi(a,b)),c)\rangle=\langle \Omega_{1}^{**}(x^{**},x^{*},\pi(a,b)),\pi(c,d)\rangle \\
&=&\langle x^{**},\Omega_{1}^{*}(x^{*},\pi(a,b),\pi(c,d))\rangle=\langle x^{**},\Omega_{1}^{*}(\Omega_{1}^{*}(x^{*},a,b),c,d)\rangle \\
&=&\langle \Omega_{1}^{**}(x^{**},\Omega_{1}^{*}(x^{*},a,b),c),d\rangle=\langle \Omega_{1}^{***}(d,x^{**},\Omega_{1}^{*}(x^{*},a,b)),c\rangle \\
&=&\langle \Omega_{1}^{***}(c,d,x^{**}),\Omega_{1}^{*}(x^{*},a,b)\rangle=\langle \Omega_{1}^{**}( \Omega_{1}^{***}(c,d,x^{**}),x^{*},a),b\rangle.
\end{eqnarray*}
Thus $\pi^{*}(\Omega_{1}^{***}(\pi^{***}(c,d),x^{**},x^{*}),a)=\Omega_{1}^{**}( \Omega_{1}^{***}(c,d,x^{**}),x^{*},a)$. Finally, we have 
\begin{eqnarray*}
\langle \Omega_{1}^{***}(b^{**},\Omega_{1}^{****}(c,d,x^{**}),x^{*}),a \rangle &=&\langle b^{**},\Omega_{1}^{**}(\Omega_{1}^{****}(c,d,x^{**}),x^{*},a\rangle \\
&=&\langle b^{**},\pi^{*}(\Omega_{1}^{***}(\pi^{***}(c,d),x^{**},x^{*}),a)\rangle \\
&=&\langle \pi^{**}(b^{**},\Omega_{1}^{***}(\pi^{***}(c,d),x^{**},x^{*})),a\rangle.
\end{eqnarray*}
The proof is complete. A similar argument applies for (2).
\end{proof}

\section{\textbf{Topological centers of bounded tri-linear maps}}
\label{sec:3}

In this section, we shall investigate the topological centers of bounded tri-linear maps.
The main definition of this section is in the following.
\begin{definition}\label{3.1}
Let $f:X\times Y\times Z\longrightarrow W$ be a bounded tri-linear map. We define the topological centers of $f$ by

$Z_{l}^{1}(f)=\{x^{**}\in X^{**}|~y^{**}\longrightarrow f^{****}(x^{**},y^{**},z^{**}) \ is \ weak^{*}-to-weak^{*}- continuous\},$

$Z_{l}^{2}(f)=\{x^{**}\in X^{**}|~z^{**}\longrightarrow f^{****}(x^{**},y^{**},z^{**}) \ is \ weak^{*}-to-weak^{*}- continuous\},$

$Z_{r}^{1}(f)=\{z^{**}\in Z^{**}|~x^{**}\longrightarrow f^{r****r}(x^{**},y^{**},z^{**}) \ is \ weak^{*}-to-weak^{*}- continuous\},$

$Z_{r}^{2}(f)=\{z^{**}\in Z^{**}|~y^{**}\longrightarrow f^{r****r}(x^{**},y^{**},z^{**}) \ is \ weak^{*}-to-weak^{*}- continuous\},$

$Z_{c}^{1}(f)=\{y^{**}\in Y^{**}|~x^{**}\longrightarrow f^{r****r}(x^{**},y^{**},z^{**}) \ is \ weak^{*}-to-weak^{*}- continuous\}.$

$Z_{c}^{2}(f)=\{y^{**}\in Y^{**}|~z^{**}\longrightarrow f^{****}(x^{**},y^{**},z^{**}) \ is \ weak^{*}-to-weak^{*}- continuous\}.$
\end{definition}

For a bounded tri-linear map $f:X\times Y\times Z\longrightarrow W$, We have   
\begin{enumerate}
\item The map $f^{****}$ is the unique extension of $f$ such that $x^{**}\longrightarrow f^{****}(x^{**},y^{**}$

$,z^{**})$ is weak$^{*}-$weak$^{*}$ continuous for each $y^{**}\in Y^{**}$ and $z^{**}\in Z^{**}$.
\item The map $f^{****}$ is the unique extension of $f$ such that $y^{**}\longrightarrow f^{****}(x,y^{**}$

$,z^{**})$ is weak$^{*}-$weak$^{*}$ continuous for each $x\in X$ and $z^{**}\in Z^{**}$.
\item The map $f^{****}$ is the unique extension of $f$ such that $z^{**}\longrightarrow f^{****}(x,y,z^{**})$ is weak$^{*}-$weak$^{*}$ continuous for each $x\in X$ and $y\in Y$.

\item The map $f^{r****r}$ is the unique extension of $f$ such that $z^{**}\longrightarrow f^{r****r}(x^{**}$

$,y^{**},z^{**})$ is weak$^{*}-$weak$^{*}$ continuous for each $x^{**}\in X^{**}$ and $y^{**}\in Y^{**}$.
\item  The map $f^{r****r}$ is the unique extension of $f$ such that $x^{**}\longrightarrow f^{r****r}(x^{**}$

$,y,z)$ is weak$^{*}-$weak$^{*}$ continuous for each $y\in Y$ and $z\in Z$.
\item The map $f^{r****r}$ is the unique extension of $f$ such that $y^{**}\longrightarrow f^{r****r}(x^{**}$

$,y^{**},z)$ is weak$^{*}-$weak$^{*}$ continuous for each $x^{**}\in X^{**}$ and $z\in Z$.
\end{enumerate}
As immediate consequences, we give the next Theorem.
\begin{theorem}\label{3.3}
If $f:X\times Y\times Z\longrightarrow W$ is a bounded tri-linear map, then $X\subseteq Z_{l}^{1}(f)$ and $Z\subseteq Z_{r}^{2}(f)$.
\end{theorem}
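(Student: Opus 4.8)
The plan is to show the two containments separately, each being a direct consequence of the "uniqueness of extension" remarks listed just before the theorem. For the first containment, $X\subseteq Z_{l}^{1}(f)$, I would fix $x\in X$ (viewed canonically in $X^{**}$) and $z^{**}\in Z^{**}$, and examine the map $y^{**}\longmapsto f^{****}(x,y^{**},z^{**})$. By item (2) of the list of remarks, $f^{****}$ is precisely the extension of $f$ for which $y^{**}\longmapsto f^{****}(x,y^{**},z^{**})$ is weak${}^*$-to-weak${}^*$ continuous for every $x\in X$ and every $z^{**}\in Z^{**}$; this is exactly the defining condition for $x\in X$ to lie in $Z_{l}^{1}(f)$. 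Hence $X\subseteq Z_{l}^{1}(f)$.

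For the second containment, $Z\subseteq Z_{r}^{2}(f)$, I would symmetrically fix $z\in Z$ and examine $y^{**}\longmapsto f^{r****r}(x^{**},y^{**},z)$. By item (6) of the list of remarks, $f^{r****r}$ is the unique extension of $f$ for which $y^{**}\longmapsto f^{r****r}(x^{**},y^{**},z)$ is weak${}^*$-to-weak${}^*$ continuous for each $x^{**}\in X^{**}$ and each $z\in Z$; again this is exactly the condition defining membership of $z\in Z$ in $Z_{r}^{2}(f)$. Therefore $Z\subseteq Z_{r}^{2}(f)$.

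The only point requiring a little care — and what I would regard as the "main obstacle", though it is really just bookkeeping — is matching the variable slots correctly: one must check that the flip $f^r$ and its extension $f^{r****r}$ send the $Z$-variable to the slot that the definition of $Z_{r}^{2}(f)$ watches, and similarly that the unflipped $f^{****}$ places the $X$-variable in the slot watched by $Z_{l}^{1}(f)$. Once the indexing in Definition \ref{3.1} is lined up against items (2) and (6) of the preceding list, both inclusions are immediate, so I would present the argument as two short paragraphs invoking those two items, with no computation needed.
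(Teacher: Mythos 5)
Your proposal is correct and matches the paper's intent exactly: the paper offers no written proof, presenting the theorem as an "immediate consequence" of the list of six continuity remarks, and the two items you invoke — (2) for $X\subseteq Z_{l}^{1}(f)$ and (6) for $Z\subseteq Z_{r}^{2}(f)$ — are precisely the ones that yield the two inclusions. Your attention to lining up the variable slots in Definition \ref{3.1} against those items is the only substantive point, and you have it right.
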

The mapping $f^{****}$ is the unique extension of $f$ such that $x^{**}\longrightarrow f^{****}(x^{**},y^{**}$
$,z^{**})$ from $X^{**}$ into $W^{**}$ is weak$^{*}-$ to $-$ weak$^{*}$ continuous for every $y^{**}\in Y^{**}$ and $z^{**}\in Z^{**}$, hence the first right topological center of $f$ may be defined as following

$Z_{r}^{1}(f)=\{z^{**}\in Z^{**}| f^{r****r}(x^{**},y^{**},z^{**})=f^{****}(x^{**},y^{**},z^{**}),\ for\ every\ $

$x^{**}\in X^{**},y^{**}\in Y^{**}\}.$

The mapping $f^{r****r}$ is the unique extension of $f$ such that $z^{**}\longrightarrow f^{r****r}(x^{**},y^{**},z^{**})$ from $Z^{**}$ into $W^{**}$ is weak$^{*}-$ to $-$ weak$^{*}$ continuous for every $x^{**}\in X^{**}$ and $y^{**}\in Y^{**}$, hence the second left topological center of $f$ may be defined as following

$Z_{l}^{2}(f)=\{x^{**}\in X^{**}| f^{r****r}(x^{**},y^{**},z^{**})=f^{****}(x^{**},y^{**},z^{**}),\ for\ every\ $

$ y^{**}\in Y^{**},z^{**}\in Z^{**}\}. $

It is clear that $Z_{r}^{1}(f)=Z_{l}^{2}(f^{r})$ and $Z_{r}^{1}(f^{r})=Z_{l}^{2}(f)$. Also $f$ is regular if and only if $Z_{r}^{1}(f)=Z^{**}$ or $Z_{l}^{2}(f)=X^{**}$. Let $g:X\times X\times X\longrightarrow X$ be a bounded tri-linear map, If $g$ is regular, Then $Z_{r}^{1}(g)=Z_{l}^{2}(g)$.
\begin{example}\label{3.4}
Let $G$ be a finite locally compact Hausdorff group. Then we have
$$f:L^{1}(G)\times L^{1}(G)\times L^{1}(G)\longrightarrow L^{1}(G)$$
defined by $f(k,g,h)=k*g*h$, is regular for every $k, g$ and $h\in L^{1}(G)$. So $Z_{r}^{1}(f)=Z_{l}^{2}(f)=L^{1}(G)$. 
\end{example}
\begin{definition}\label{3.5}
Let $f:X\times Y\times Z\longrightarrow W$ be a bounded tri-linear map. Then
the map $f$ is said to be first left (right) strongly irregular when $Z_{l}^{1}(f)\subseteq X$ ($Z_{r}^{1}(f)\subseteq Z$). The definition of second and third left (right) strongly irregular are similar.
\end{definition}
The proof of the following theorem is straightforward and we left its proof.
\begin{theorem}\label{3.6}
Let $Y$ be a reflexive space and let $f:X\times Y\times Z\longrightarrow W$ be a bounded tri-linear map. Then
\begin{enumerate}
\item The map $f$ is regular and  first right strongly irregular if and only if $Z$  is reflexive.
\item The map $f$ is regular and  second left strongly irregular if and only if $X$  is reflexive.
\end{enumerate}
\end{theorem}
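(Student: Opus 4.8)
The plan is to reduce everything to the two equivalent characterizations of regularity already recorded above --- namely that $f$ is regular if and only if $Z_{r}^{1}(f)=Z^{**}$, equivalently $Z_{l}^{2}(f)=X^{**}$ --- together with the fact quoted in the introduction that a bounded tri-linear map two of whose three domain spaces are reflexive is automatically regular. Throughout I identify $X,Y,Z$ with their canonical images in the respective biduals.

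For the forward implication in (1), suppose $f$ is regular and first right strongly irregular. Regularity gives $Z_{r}^{1}(f)=Z^{**}$, while first right strong irregularity gives $Z_{r}^{1}(f)\subseteq Z$. Hence $Z^{**}\subseteq Z$, so $Z$ is reflexive; note that this direction does not use the hypothesis on $Y$. For the reverse implication, assume $Z$ is reflexive. Then $Y$ and $Z$ are both reflexive, so by the cited result $f$ is regular, whence $Z_{r}^{1}(f)=Z^{**}$; but $Z^{**}=Z$ by reflexivity, so $Z_{r}^{1}(f)=Z\subseteq Z$ and $f$ is first right strongly irregular. This settles (1).

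For (2) I would invoke the flip $f^{r}:Z\times Y\times X\longrightarrow W$, which has the same middle factor $Y$, is regular precisely when $f$ is, and satisfies $Z_{r}^{1}(f^{r})=Z_{l}^{2}(f)$ (as noted in the text); hence $f^{r}$ is first right strongly irregular exactly when $f$ is second left strongly irregular. Applying part (1) to $f^{r}$, whose third domain space is $X$, yields: $f$ is regular and second left strongly irregular if and only if $X$ is reflexive. Alternatively one can argue directly, mirroring (1): regularity gives $Z_{l}^{2}(f)=X^{**}$, second left strong irregularity gives $Z_{l}^{2}(f)\subseteq X$, so $X$ is reflexive; conversely $X$ and $Y$ reflexive forces regularity and then $Z_{l}^{2}(f)=X^{**}=X$.

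There is essentially no serious obstacle here, which is consistent with the authors calling the statement straightforward; the only points requiring a little care are that the inclusion $Z_{r}^{1}(f)\subseteq Z$ combined with the equality $Z_{r}^{1}(f)=Z^{**}$ genuinely forces $Z^{**}=Z$ (and likewise for $X$), and that the ``two reflexive $\Rightarrow$ regular'' principle may legitimately be applied to $f^{r}$ because its middle factor is still the reflexive space $Y$. Everything else is a direct substitution into Definitions \ref{3.1} and \ref{3.5}.
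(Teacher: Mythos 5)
The paper itself omits the proof of Theorem \ref{3.6}, describing it as straightforward, so there is nothing to compare against line by line; your argument is correct and is evidently the intended one, namely combining the recorded characterization of regularity ($Z_{r}^{1}(f)=Z^{**}$, equivalently $Z_{l}^{2}(f)=X^{**}$) with the definition of strong irregularity for the forward direction, and the ``at least two reflexive factors imply regular'' principle for the converse, with part (2) following from part (1) via the flip identities $Z_{r}^{1}(f^{r})=Z_{l}^{2}(f)$. No gaps.
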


 As immediate consequences of the Theorem \ref{3.6} we have the next corollary.
\begin{corollary}\label{3.7}
Let $f:X\times Y\times Z\longrightarrow W$ be a bounded tri-linear map. If $X$ and $Y$ (or $Z$ and $Y$) are reflexive spaces then $f$ is regular.
\end{corollary}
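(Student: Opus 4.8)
The plan is to deduce Corollary~\ref{3.7} directly from Theorem~\ref{3.6}, observing that in each of the two cases listed the space $Y$ is reflexive, so Theorem~\ref{3.6} is applicable, and then extracting the relevant one-directional implication from each of its biconditionals.

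First I would treat the case in which $X$ and $Y$ are reflexive. Since $Y$ is reflexive, Theorem~\ref{3.6}(2) applies to $f$; it asserts in particular the implication ``if $X$ is reflexive, then $f$ is regular (and moreover second left strongly irregular)''. As $X$ is assumed reflexive, this already yields that $f$ is regular, which is all that is required. Next I would treat the case in which $Z$ and $Y$ are reflexive. Again the reflexivity of $Y$ makes Theorem~\ref{3.6} applicable, and part~(1) gives the implication ``if $Z$ is reflexive, then $f$ is regular (and first right strongly irregular)''; since $Z$ is reflexive, $f$ is regular. These two cases exhaust the hypotheses of the corollary, so the proof is complete.

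There is essentially no obstacle here beyond bookkeeping: the only point to be careful about is that ``$Y$ reflexive'' is the common standing hypothesis of Theorem~\ref{3.6} and that each of its two parts is a biconditional whose ``reflexivity of the remaining space $\Rightarrow$ regularity'' direction is exactly what we need. If one wanted a self-contained argument avoiding Theorem~\ref{3.6}, one could instead verify the iterated--$w^{*}$--limit criterion for regularity stated in the introduction: reflexivity of $Y$ lets one replace the middle weak$^{*}$ limit by an honest element of $Y$, and reflexivity of $X$ (respectively $Z$) does the same for the outer variable, after which the two iterated limits coincide by separate continuity of $f$; but invoking Theorem~\ref{3.6} is the shorter route and is the one I would present.
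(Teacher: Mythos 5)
Your proposal is correct and follows exactly the route the paper intends: the paper presents Corollary~\ref{3.7} as an immediate consequence of Theorem~\ref{3.6} with no further argument, and you correctly extract the ``reflexivity of the remaining space $\Rightarrow$ regularity'' direction from each biconditional under the standing hypothesis that $Y$ is reflexive. Nothing further is needed.
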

\begin{corollary}\label{3.8}
Let $A$ be a Banach algebra. If $A$ be reflexive, then 
\begin{enumerate}
\item The bounded tri-linear map $\Omega_{1}$ is regular, first and second left strongly irregular.
\item The bounded tri-linear map $\Omega_{2}$ is regular, first and second right strongly irregular.
\end{enumerate}
\end{corollary}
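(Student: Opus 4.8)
The plan is to reduce both parts to the single observation that reflexivity of $A$ makes the canonical embedding $A\hookrightarrow A^{**}$ surjective, so that $A^{**}=A$ under the canonical identification; the three conclusions in each part then follow by matching $\Omega_{1}$ and $\Omega_{2}$ against the results of Section \ref{sec:3}.

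For part (1) I would regard $\Omega_{1}:A\times A\times X\longrightarrow X$ as a bounded tri-linear map $f:X'\times Y'\times Z'\longrightarrow W'$ with $X'=Y'=A$ and $Z'=W'=X$. Its first two factor spaces are the reflexive space $A$, so Corollary \ref{3.7} gives at once that $\Omega_{1}$ is regular; alternatively, since here the middle factor $A$ and the first factor $A$ are both reflexive, Theorem \ref{3.6}(2) gives simultaneously that $\Omega_{1}$ is regular and second left strongly irregular. For the remaining irregularity statements, note that by Definition \ref{3.1} one always has $Z_{l}^{1}(\Omega_{1})\subseteq A^{**}$ and $Z_{l}^{2}(\Omega_{1})\subseteq A^{**}$; since $A^{**}=A$, the inclusions demanded by Definition \ref{3.5} hold automatically, so $\Omega_{1}$ is first and second left strongly irregular. (By Theorem \ref{3.3} one even gets $Z_{l}^{1}(\Omega_{1})=A$, though only the inclusion is needed.)

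Part (2) is the mirror image: write $\Omega_{2}:X\times A\times A\longrightarrow X$ as $f:X'\times Y'\times Z'\longrightarrow W'$ with $X'=W'=X$ and $Y'=Z'=A$, so that the last two factors are reflexive; Corollary \ref{3.7} (its ``$Z$ and $Y$'' clause), or Theorem \ref{3.6}(1), gives that $\Omega_{2}$ is regular, and since $Z_{r}^{1}(\Omega_{2})$ and $Z_{r}^{2}(\Omega_{2})$ are contained in $A^{**}=A$ by Definition \ref{3.1}, Definition \ref{3.5} shows $\Omega_{2}$ is first and second right strongly irregular. I do not expect any real obstacle here: the only point requiring care is the bookkeeping of which of the three factors of $\Omega_{1}$ and $\Omega_{2}$ plays the role of ``$X$'', ``$Y$'', ``$Z$'' in Corollary \ref{3.7}, Theorem \ref{3.6} and Definition \ref{3.5}, together with the remark that once the pertinent factor space coincides with its own bidual the corresponding strong-irregularity inclusion is vacuously true.
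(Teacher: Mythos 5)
Your proof is correct and takes essentially the approach the paper intends: Corollary \ref{3.8} is stated there without proof as a consequence of Theorem \ref{3.6} and Corollary \ref{3.7}, and your argument (regularity from the two reflexive factors of $\Omega_{1}$, resp.\ $\Omega_{2}$, plus the observation that each strong-irregularity inclusion of Definition \ref{3.5} becomes automatic once the relevant topological center sits inside $A^{**}=A$) is exactly that reasoning made explicit. The bookkeeping of which factor plays the role of $X$, $Y$, $Z$ is handled correctly in both parts.
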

\begin{corollary}\label{3.9}
Let $m:X\times X\longrightarrow X$ be a bounded bilinear map and let $f:X\times X\times X\longrightarrow X$ be a bounded tri-linear map. Then 
\begin{enumerate}
\item If $f$ is regular and  first right (or second left) strongly irregular then $m$ is Arens regular.
\item If $m$ is Arens regular and  right (or left) strongly irregular then $f$ is regular.
\end{enumerate}
\end{corollary}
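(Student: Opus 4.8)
The device driving both statements is the reformulation recorded just after Example~\ref{3.4}: a bounded tri-linear map $f$ is regular exactly when $Z_r^1(f)=Z^{**}$, equivalently when $Z_l^2(f)=X^{**}$. I would combine this with the remark from the introduction (and with Corollary~\ref{3.7}) that a multilinear map is automatically regular as soon as enough of its domain spaces are reflexive. Note that for $f\colon X\times X\times X\to X$ all of the spaces ``$X$'', ``$Y$'', ``$Z$'', ``$W$'' of the general framework coincide with $X$, and likewise $m\colon X\times X\to X$ has both of its domain factors equal to $X$.

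\emph{Part (1).} Suppose $f$ is regular and first right strongly irregular. Regularity gives $Z_r^1(f)=X^{**}$, while first right strong irregularity (Definition~\ref{3.5}) gives $Z_r^1(f)\subseteq X$; since $X$ sits canonically inside $X^{**}$, the two inclusions force $X=X^{**}$, so $X$ is reflexive. If instead $f$ is regular and second left strongly irregular, the same two-line argument with $Z_l^2(f)$ in place of $Z_r^1(f)$ gives $X^{**}=Z_l^2(f)\subseteq X$, again $X$ reflexive. Once $X$ is reflexive, $m\colon X\times X\to X$ is a bounded bilinear map between reflexive spaces: the Arens extensions $m^{***}$ and $m^{r***r}$ both restrict to $m$ on $X\times X=X^{**}\times X^{**}$, hence agree, so $m$ is Arens regular (the bilinear shadow of the introductory reflexivity remark).

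\emph{Part (2).} Here the goal is again to deduce that $X$ is reflexive, for then every domain space of $f\colon X\times X\times X\to X$ is reflexive and $f$ is regular by Corollary~\ref{3.7}. The plan is to run the bilinear analogue of Part~(1): Arens regularity of $m$ says the topological centre of $m$ is all of $X^{**}$, and the strong irregularity hypothesis squeezes that centre into $X$, collapsing $X^{**}$ onto $X$; after that $f$ is regular. Carrying this out amounts to transcribing the definitions of the topological centres and the relevant adjoint identities into the bilinear setting, with Arens regularity of $m$ playing the part that regularity of $f$ played above.

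\emph{Main obstacle.} The arithmetic with adjoints and the bookkeeping over the several variants of ``strongly irregular'' is routine; the delicate step is the converse direction in Part~(2). Arens regularity of a bilinear map does not by itself imply reflexivity --- every $C^{*}$-algebra is Arens regular but need not be reflexive --- so one cannot get $X$ reflexive from ``$m$ Arens regular'' alone, nor from the strong irregularity of $f$ alone. The proof must therefore make the Arens regularity of $m$ and the strong irregularity of $f$ act together (in effect, use the ``only if'' half of a Theorem~\ref{3.6}-type equivalence), and pinning down exactly how the strong irregularity of $f$, as opposed to that of $m$, feeds into the reflexivity of $X$ is the point that needs real care.
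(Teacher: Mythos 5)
Your Part (1) is correct and is exactly the intended argument: regularity of $f$ gives $Z_{r}^{1}(f)=X^{**}$ (resp.\ $Z_{l}^{2}(f)=X^{**}$), the strong irregularity hypothesis gives the reverse inclusion into $X$, hence $X=X^{**}$ is reflexive, and a bounded bilinear map with a reflexive domain factor is Arens regular.

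Part (2), however, is left incomplete, and the reason is a misreading of the hypothesis. In ``$m$ is Arens regular and right (or left) strongly irregular'' both adjectives qualify $m$, not $f$: the assumed strong irregularity is the bilinear one, $Z_{r}(m)\subseteq X$ (resp.\ $Z_{l}(m)\subseteq X$), for the topological centres of $m$ recalled in the introduction. Your closing paragraph instead treats the strong irregularity as a property of $f$ and on that basis announces a ``delicate step'' in making the two hypotheses act together; no such step exists. With the correct reading, Part (2) is the exact mirror of Part (1): Arens regularity of $m$ gives $Z_{l}(m)=X^{**}$ (equivalently $Z_{r}(m)=X^{**}$), the assumed strong irregularity gives $Z_{l}(m)\subseteq X$ (or $Z_{r}(m)\subseteq X$), so $X^{**}\subseteq X$ and $X$ is reflexive; then all three domain factors of $f\colon X\times X\times X\longrightarrow X$ are reflexive and $f$ is regular by Corollary~\ref{3.7}. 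Your observation that Arens regularity alone does not imply reflexivity (e.g.\ $C^{*}$-algebras) is true but beside the point, for the same reason it is beside the point in Part (1): it is the conjunction of regularity with strong irregularity of the \emph{same} map $m$ that forces reflexivity. As written, your Part (2) is a plan plus a misidentified obstacle rather than a proof; once the hypothesis is attached to the right map the argument closes in two lines.
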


\begin{example}\label{3.10}
Let $G$ be an infinite, compact Hausdorff group and let $1 < p < \infty$. We know from \cite[pp 54]{9}, that $L^{p}(G)* L^{1}(G)\subset L^{p}(G)$ where 
$$(k* g)(x)=\int_{G}k(y)g(y^{-1}x)dy,\ \ \ \ \ \ (x\in G, k\in L^{p}(G),g\in L^{1}(G)).$$
On the other hand, the Banach space $L^{p}(G)$ is reflexive, thus the bounded tri-linear mapping
$$f:L^{p}(G)\times L^{1}(G)\times L^{p}(G)\longrightarrow L^{p}(G)$$
defined by $f(k,g,h)=(k* g)* h$, is regular for every $k,h\in L^{p}(G)$ and $g\in L^{1}(G)$. Therefore $Z_{l}^{2}(f)=L^{p}(G)^{**}=L^{p}(G)$, thus $f$ is second left strongly irregular.
\end{example}
\begin{theorem}\label{3.11}
Let $A$ be a Banach algebra. Then
\begin{enumerate}
\item If $(\Omega_{1},X)$ is a left Banach $A-$module and $\Omega_{1}^{***}, \pi^{***}(A,A)$ are factors then, $Z_{l}^{1}(\Omega_{1})\subseteq Z_{l}(\pi)$.
\item If $(X,\Omega_{2})$ is a right Banach $A-$module and $\Omega_{2}^{r***r}, \pi^{***}(A,A)$ are factors then, $Z_{r}^{2}(\Omega_{2})\subseteq Z_{r}(\pi)$.
\end{enumerate}
\end{theorem}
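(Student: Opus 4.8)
Fix $a^{**}\in Z_{l}^{1}(\Omega_{1})$; the goal is to show $a^{**}\in Z_{l}(\pi)$, i.e. that $b^{**}\longmapsto\pi^{***}(a^{**},b^{**})=a^{**}\,\square\,b^{**}$ is weak$^{*}$-to-weak$^{*}$ continuous on $A^{**}$. Since this map is bounded and linear into the dual space $A^{**}$, it is weak$^{*}$-to-weak$^{*}$ continuous exactly when, for every $a^{*}\in A^{*}$, the scalar functional $b^{**}\longmapsto\langle\pi^{***}(a^{**},b^{**}),a^{*}\rangle$ is weak$^{*}$ continuous on $A^{**}$. This is what I will verify; part (2) then follows by the symmetric argument, with Theorem \ref{2.4}(2), the second Arens product $\lozenge$, and the $r$-flipped higher adjoints $\Omega_{2}^{r****r}$ in place of their counterparts below.

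\emph{Step 1 (a rewriting identity).} Unwinding the definitions of the higher adjoints, one has $\langle g^{**},\Omega_{1}^{***}(e^{**},x^{**},x^{*})\rangle=\langle\Omega_{1}^{****}(g^{**},e^{**},x^{**}),x^{*}\rangle$ for all $g^{**},e^{**}\in A^{**}$, $x^{**}\in X^{**}$, $x^{*}\in X^{*}$. Taking $g^{**}=a^{**}\,\square\,b^{**}$ and $e^{**}=c^{**}\,\square\,d^{**}$, and then using that $(\Omega_{1}^{****},X^{**})$ is a left Banach $(A^{**},\square)$-module (so that $\Omega_{1}^{****}(a^{**}\square b^{**},c^{**}\square d^{**},x^{**})=\Omega_{1}^{****}(a^{**},b^{**},\Omega_{1}^{****}(c^{**},d^{**},x^{**}))$), I obtain
\[
\big\langle\pi^{***}(a^{**},b^{**}),\,\Omega_{1}^{***}(c^{**}\square d^{**},x^{**},x^{*})\big\rangle=\big\langle\Omega_{1}^{****}\big(a^{**},b^{**},\Omega_{1}^{****}(c^{**},d^{**},x^{**})\big),\,x^{*}\big\rangle
\]
for all $b^{**},c^{**},d^{**}\in A^{**}$, $x^{**}\in X^{**}$, $x^{*}\in X^{*}$. (Theorem \ref{2.4}(1) is the $c,d\in A$ prototype of this identity.)

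\emph{Step 2 (topological centre and factorization).} Because $a^{**}\in Z_{l}^{1}(\Omega_{1})$, the map $b^{**}\longmapsto\Omega_{1}^{****}(a^{**},b^{**},w^{**})$ is weak$^{*}$-to-weak$^{*}$ continuous for every $w^{**}\in X^{**}$, in particular for $w^{**}=\Omega_{1}^{****}(c^{**},d^{**},x^{**})$; composing with evaluation at $x^{*}$, the right-hand side of the displayed identity is weak$^{*}$ continuous in $b^{**}$, hence so is its left-hand side. Now invoke the factor hypotheses: since $\pi^{***}$ is a factor, every $e^{**}\in A^{**}$ has the form $c^{**}\,\square\,d^{**}$, and since $\Omega_{1}^{***}$ is a factor, the functionals $\Omega_{1}^{***}(c^{**}\square d^{**},x^{**},x^{*})$ then exhaust $A^{*}$. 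Therefore $b^{**}\longmapsto\langle\pi^{***}(a^{**},b^{**}),a^{*}\rangle$ is weak$^{*}$ continuous for every $a^{*}\in A^{*}$, i.e. $a^{**}\in Z_{l}(\pi)$, which proves (1).

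The step I expect to require the most care is this last passage: one must be sure that the two ``factor'' assumptions jointly represent \emph{every} $a^{*}\in A^{*}$ in the form $\Omega_{1}^{***}(c^{**}\square d^{**},x^{**},x^{*})$ — a merely weak$^{*}$-dense supply of such functionals would not permit the conclusion of weak$^{*}$ continuity on all of $A^{**}$ — and, for (2), to carry out the analogous bookkeeping with $\Omega_{2}^{r****r}$, $\lozenge$, and Theorem \ref{2.4}(2). Steps 1 and 2 are otherwise a matter of unwinding the definitions and the bidual module identities already recorded in the paper.
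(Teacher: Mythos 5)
Your proposal is correct and follows essentially the same route as the paper's proof: both arguments represent an arbitrary $a^{*}\in A^{*}$ as $\Omega_{1}^{***}(\pi^{***}(c,d),x^{**},x^{*})$ via the two factorization hypotheses, rewrite $\langle\pi^{***}(a^{**},b^{**}),a^{*}\rangle$ as $\langle\Omega_{1}^{****}(a^{**},b^{**},\Omega_{1}^{****}(c,d,x^{**})),x^{*}\rangle$ using the module identity, and then invoke $a^{**}\in Z_{l}^{1}(\Omega_{1})$ to get weak$^{*}$ continuity in $b^{**}$. The one point where you diverge is Step 1: you establish the rewriting identity for arbitrary $c^{**},d^{**}\in A^{**}$ by appealing to the assertion (stated after Definition \ref{2.1} but not proved in the paper) that $(\Omega_{1}^{****},X^{**},\Omega_{2}^{****})$ is a Banach $(A^{**},\square,\square)$-module, whereas the paper uses only the case $c,d\in A$, which is exactly Theorem \ref{2.4}(1) and is proved there; this is precisely why the paper's hypothesis is surjectivity of $\pi^{***}$ restricted to $A\times A$ rather than surjectivity of $\square$ on $A^{**}\times A^{**}$. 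Your version therefore buys a formally weaker inner factorization hypothesis at the price of resting on the unproved bidual module identity; if you keep the paper's hypothesis and simply take $c^{**}=c$ and $d^{**}=d$ in $A$, your Step 1 reduces to Theorem \ref{2.4}(1) paired against $a^{**}$ and the two proofs coincide. Your closing caveat is also the right one: both factorizations must be exact surjectivity rather than weak$^{*}$ density, and that is indeed how the paper uses them.
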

\begin{proof}
We prove only (1), the other one uses the same argument. Let $a^{**}\in Z_{l}^{1}(\Omega_{1})$, we show that $a^{**}\in Z_{l}(\pi)$. Let $\{b_{\alpha}^{**}\}$ be a net in $A^{**}$  which converge to $b^{**}\in A^{**}$ in the $w^{*}-$topologies. We must show that $\pi^{***}(a^{**},b_{\alpha}^{**})$ converge to $\pi^{***}(a^{**},b^{**})$  in the $w^{*}-$topologies. 
Let $a^{*}\in A^{*}$, since $\Omega_{1}^{***}$ factors, so there exist $x^{*}\in X^{*}, x^{**}\in X^{**}$ and $c^{**}\in A^{**}$ such that $a^{*}=\Omega_{1}^{***}(c^{**},x^{**},x^{*})$. In the other hands $\pi^{***}(A,A)$ factors, thus there exist $c,d\in A$ 
such that $\pi^{***}(c,d)=c^{**}$. Because $a^{**}\in Z_{l}^{1}(\Omega_{1})$ thus $\Omega_{1}^{****}(a^{**},b_{\alpha}^{**},x^{**})$  converge to $\Omega_{1}^{****}(a^{**},b^{**},x^{**})$ in the $w^{*}-$topologies.

In partiqular $\Omega_{1}^{****}(a^{**},b_{\alpha}^{**},\Omega_{1}^{****}(c,d,x^{**}))$ converge to $\Omega_{1}^{****}(a^{**},b^{**},$\\
$\Omega_{1}^{****}(c,d,x^{**}))$  in the $w^{*}-$topologies. Now by Theorem \ref{2.4}, we have
 \begin{eqnarray*}
\lim\limits_{\alpha}\langle \pi^{***}(a^{**},b_{\alpha}^{**}),a^{*}\rangle &=&\lim\limits_{\alpha}\langle \pi^{***}(a^{**},b_{\alpha}^{**}),\Omega_{1}^{***}(c^{**},x^{**},x^{*})\rangle \\
&=&\lim\limits_{\alpha}\langle \pi^{***}(a^{**},b_{\alpha}^{**}),\Omega_{1}^{***}(\pi^{***}(c,d),x^{**},x^{*})\rangle \\
&=&\lim\limits_{\alpha}\langle a^{**},\pi^{**}(b_{\alpha}^{**},\Omega_{1}^{***}(\pi^{***}(c,d),x^{**},x^{*}))\rangle \\
&=&\lim\limits_{\alpha}\langle a^{**},\Omega_{1}^{***}(b_{\alpha}^{**},\Omega_{1}^{****}(c,d,x^{**}),x^{*})\rangle \\
&=&\lim\limits_{\alpha}\langle \Omega_{1}^{****}(a^{**},b_{\alpha}^{**},\Omega_{1}^{****}(c,d,x^{**}),x^{*}\rangle \\
&=&\langle \Omega_{1}^{****}(a^{**},b^{**},\Omega_{1}^{****}(c,d,x^{**}),x^{*}\rangle \\
&=&\langle a^{**},\Omega_{1}^{***}(b^{**},\Omega_{1}^{****}(c,d,x^{**}),x^{*})\rangle \\
&=&\langle a^{**},\pi^{**}(b^{**},\Omega_{1}^{***}(\pi^{***}(c,d),x^{**},x^{*}))\rangle \\
&=&\langle a^{**},\pi^{**}(b^{**},\Omega_{1}^{***}(c^{**},x^{**},x^{*}))\rangle \\
&=&\langle a^{**},\pi^{**}(b^{**},a^{*})\rangle \\
&=&\langle \pi^{***}(a^{**},b^{**}),a^{*}\rangle.
\end{eqnarray*}
Therefore $\pi^{***}(a^{**},b_{\alpha}^{**})$ converge to $\pi^{***}(a^{**},b^{**})$  in the $w^{*}-$topologies, as required.
\end{proof}

\begin{theorem}\label{4.1}
Let $A$ be a Banach algebra and  $\Omega:A\times A\times A\longrightarrow A$ be a bounded tri-linear mapping. Then for every $a\in A, a^{*}\in A^{*}$ and $a^{**}\in A^{**}$,
\begin{enumerate}
\item If $A$ has a bounded right approximate identity and bounded linear map $T:A^{*}\longrightarrow A^{*}$ given by $T(a^{*})=\pi^{**}(a^{**},a^{*})$ be weakly compactenss, then $\Omega$ is regular.
\item If $A$ has a bounded left approximate identity and bounded linear map $T:A\longrightarrow A^{*}$ given by $T(a)=\pi^{r*r*}(a^{**},a)$ be weakly compactenss, then $\Omega$ is regular.
\end{enumerate}
\end{theorem}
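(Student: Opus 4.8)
The guiding observation is that either hypothesis is, in disguise, the assertion that $A$ is reflexive. Once we know $A$ is reflexive, the tri-linear map $\Omega:A\times A\times A\to A$ has all three of its domain factors reflexive, hence in particular at least two of them, and so $\Omega$ is regular by the criterion recalled in the Introduction. Thus it suffices to prove, under the hypotheses of (1) (respectively of (2)), that $A$ is reflexive.

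For (1), choose a bounded right approximate identity $(e_\alpha)$ in $A$. Being bounded, the net has a $w^{*}$-cluster point $E\in A^{**}$; passing to a subnet we may assume $e_\alpha\to E$ in the $w^{*}$-topology. The heart of the matter is to evaluate the operator of the statement at the particular point $a^{**}=E$: for $a^{*}\in A^{*}$ and $a\in A$,
\[\langle\pi^{**}(E,a^{*}),a\rangle=\langle E,\pi^{*}(a^{*},a)\rangle=\lim_{\alpha}\langle\pi^{*}(a^{*},a),e_{\alpha}\rangle=\lim_{\alpha}\langle a^{*},ae_{\alpha}\rangle=\langle a^{*},a\rangle,\]
the last step because $\|ae_{\alpha}-a\|\to0$. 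Hence $\pi^{**}(E,a^{*})=a^{*}$ for every $a^{*}$, i.e. the operator $T=\pi^{**}(E,\,\cdot\,)$ is the identity of $A^{*}$. Applying the hypothesis with $a^{**}=E$, this $T$ is weakly compact, so the identity map of $A^{*}$ is weakly compact; equivalently $B_{A^{*}}$ is weakly compact, so $A^{*}$, and therefore $A$, is reflexive, and $\Omega$ is regular by the first paragraph.

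Part (2) is the mirror image. Take a bounded left approximate identity $(e_\alpha)$ and a $w^{*}$-cluster point $E\in A^{**}$, again passing to a subnet with $e_{\alpha}\to E$ in the $w^{*}$-topology. Using $\|e_{\alpha}a-a\|\to0$ one obtains, for $a\in A$ and $a^{*}\in A^{*}$,
\[\langle\pi^{r*r*}(E,a),a^{*}\rangle=\langle E,\pi^{r*}(a^{*},a)\rangle=\lim_{\alpha}\langle a^{*},e_{\alpha}a\rangle=\langle a^{*},a\rangle,\]
so that $T=\pi^{r*r*}(E,\,\cdot\,)$ is precisely the canonical embedding of $A$ into $A^{**}$. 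By the hypothesis this embedding is weakly compact, which forces reflexivity of $A$: the unit ball of $A$, being $w^{*}$-dense in $B_{A^{**}}$, is then relatively weakly compact in $A^{**}$, whence $B_{A^{**}}$ is weakly compact and $A$ is reflexive; so again $\Omega$ is regular. The only step that really needs care, in both parts, is the computation identifying $T$ with the identity of $A^{*}$ (respectively with the canonical embedding), which relies on the approximate-identity estimates persisting along the convergent subnet; after that, the proof is just the standard fact that a Banach space whose identity map — equivalently, whose embedding into its bidual — is weakly compact must be reflexive, combined with the regularity criterion for tri-linear maps with two reflexive factors.
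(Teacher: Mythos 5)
Your proof is correct, but it follows a genuinely different route from the paper's. The paper fixes nets $\{a_\alpha\}\subseteq A$, $\{a^*_\beta\}\subseteq A^*$ converging weak$^*$ to $a^{**}$, $a^{***}$, uses Gantmacher's theorem to get $T^{**}(A^{***})=\pi^{*****}(A^{**},A^{***})\subseteq A^{*}$, and from this deduces by an iterated-limit computation that the module action $\pi^{*}$ of $A$ on $A^{*}$ is Arens regular; it then invokes the external result of Eshaghi Gordji and Filali (\cite[Theorem 2.1]{8}, quoted in Section 2) that for an algebra with a bounded approximate identity this regularity is equivalent to reflexivity of $A$. You instead specialize the hypothesis at a single, well-chosen point: a weak$^{*}$-cluster point $E$ of the bounded approximate identity, for which $T_{E}=\pi^{**}(E,\cdot)$ is computed to be the identity of $A^{*}$ (respectively, $\pi^{r*r*}(E,\cdot)$ is the canonical embedding $A\hookrightarrow A^{**}$), so that weak compactness of $T_{E}$ forces reflexivity by the elementary characterization of reflexive spaces via weak compactness of the unit ball (in part (2), together with Goldstine's theorem). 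Both arguments then finish identically, citing the criterion that a tri-linear map with at least two reflexive factors is regular. Your version is more self-contained, avoiding both the Arens-regularity computation and the citation of \cite{8}, and it in fact proves a formally stronger statement: weak compactness of $T_{a^{**}}$ is needed only for the single element $a^{**}=E$, not for all $a^{**}\in A^{**}$ as the paper's argument requires. The one point worth stating explicitly is that passing to a weak$^{*}$-convergent subnet preserves the approximate-identity property (it does, since $\|ae_{\alpha}-a\|\to 0$ persists along subnets), which you do acknowledge.
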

\begin{proof}
We only prove (1). Let $T$ be weakly compact, then $T^{**}(A^{***})\subseteq A^{*}$. In the other hand, a direct verification reveals that $T^{**}(A^{***})=\pi ^{*****}(A^{**},A^{***})$. Thus $\pi ^{*****}(A^{*},A^{***})\subseteq A^{*}$. Now let $a^{**},b^{**}\in A^{**}$ and $a^{***}\in A^{***}$ too $\{a_{\alpha} \}$ and $\{a_{\beta}^{*} \}$ are nets in $A$ and $A^{*}$ which converge to $a^{**}$ and $a^{***}$ in the $w^{*}-$topologies, respectively. Then we have
\begin{eqnarray*}
\langle \pi^{*r***r}(a^{***},a^{**}),b^{**}\rangle &=&\langle \pi^{*r***}(a^{**},a^{***}),b^{**}\rangle=\langle a^{**},\pi^{*r**}(a^{***},b^{**})\rangle \\
&=&\lim\limits_{\alpha}\langle \pi^{*r**}(a^{***},b^{**}),a_{\alpha}\rangle =\lim\limits_{\alpha}\langle a^{***},\pi^{*r*}(b^{**},a_{\alpha})\rangle \\
&=&\lim\limits_{\alpha}\lim\limits_{\beta}\langle \pi^{*r*}(b^{**},a_{\alpha}),a_{\beta}^{*}\rangle=\lim\limits_{\alpha}\lim\limits_{\beta}\langle b^{**},\pi^{*r}(a_{\alpha},a_{\beta}^{*}) \rangle \\
&=&\lim\limits_{\alpha}\lim\limits_{\beta}\langle b^{**},\pi^{*}(a_{\beta}^{*},a_{\alpha})\rangle=\lim\limits_{\alpha}\lim\limits_{\beta}\langle \pi^{**}(b^{**},a_{\beta}^{*}),a_{\alpha}\rangle \\
&=&\lim\limits_{\alpha}\lim\limits_{\beta}\langle \pi^{***}(a_{\alpha},b^{**}),a_{\beta}^{*}\rangle =\lim\limits_{\alpha} \langle a^{***},\pi^{***}(a_{\alpha},b^{**})\rangle \\
&=&\lim\limits_{\alpha} \langle \pi^{****}(a^{***},a_{\alpha}),b^{**}\rangle =\lim\limits_{\alpha} \langle\pi^{*****}(b^{**},a^{***}),a_{\alpha}\rangle \\
&=&\langle a^{**},\pi^{*****}(b^{**},a^{***}) \rangle =\langle \pi^{****}(a^{***},a^{**}),b^{**}\rangle.
\end{eqnarray*}
Therefore $\pi^{*}$ is Arens regular. Now we implies that $A$ is reflexive by \cite[Theorem 2.1]{8}. It follows that $\Omega$ is regular and this completes the proof.
\end{proof}

\section{\textbf{Factors of bounded tri-linear mapping}}

We commence with the following definition.
\begin{definition}\label{5.1}
Let $X, Y, Z, S_{1}, S_{2}$ and $S_{3}$ be normed spaces,  $f:X\times Y\times Z\longrightarrow W$ and  $g:S_{1}\times S_{2}\times S_{3}\longrightarrow W$  be bounded tri-linear mappings. Then we say that $f$ factors through $g$ by bounded linear mappings $h_{1}:X\longrightarrow S_{1}$,  $h_{2}:Y\longrightarrow S_{2}$ and  $h_{3}:Z\longrightarrow S_{3}$, if $f(x,y,z)=g(h_{1}(x),h_{2}(y),h_{3}(z))$.
\end{definition}

In the next result we provide a necessary and sufficient condition such that first and second extension of a bounded tri-linear mapping $f$ factors through first and second extension of a bouneded tri-linear mapping $g$, respectively.
\begin{theorem}\label{5.2}
Let $f:X\times Y\times Z\longrightarrow W$ and $g:S_{1}\times S_{2}\times S_{3}\longrightarrow W$ be bounded tri-linear mapping.  Then 
\begin{enumerate}
\item The map $f$ factors through $g$ if and only if $f^{****}$ factors through $g^{****}$,
\item The map $f$ factors through $g$ if and only if $f^{r****r}$ factors through $g^{r****r}$.
\end{enumerate}
\end{theorem}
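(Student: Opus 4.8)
The plan is to establish part (1) directly, by tracking how a factorization propagates through the four successive adjoints, and then to deduce part (2) from part (1) by conjugating with the flip.

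First I would isolate a ``one-step'' lemma. Suppose $\phi:A_{1}\times A_{2}\times A_{3}\longrightarrow B$ and $\psi:C_{1}\times C_{2}\times C_{3}\longrightarrow D$ are bounded tri-linear maps, $p_{i}:A_{i}\longrightarrow C_{i}$ ($i=1,2,3$) and $q:D\longrightarrow B$ are bounded linear maps, and $\phi(a_{1},a_{2},a_{3})=q(\psi(p_{1}a_{1},p_{2}a_{2},p_{3}a_{3}))$ for all $a_{i}$. Testing against the defining formula $\langle\phi^{*}(b^{*},a_{1},a_{2}),a_{3}\rangle=\langle b^{*},\phi(a_{1},a_{2},a_{3})\rangle$ and pushing $q$ and $\psi$ across the pairing gives
$$\phi^{*}(b^{*},a_{1},a_{2})=p_{3}^{*}\left(\psi^{*}\left(q^{*}b^{*},\,p_{1}a_{1},\,p_{2}a_{2}\right)\right)\qquad(b^{*}\in B^{*}).$$
So $\phi^{*}$ again factors through $\psi^{*}$, with the triple of domain maps cyclically shifted to $(q^{*},p_{1},p_{2})$ and the codomain map replaced by $p_{3}^{*}$.

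Next I would apply this to $f(x,y,z)=g(h_{1}x,h_{2}y,h_{3}z)$, the instance with $(p_{1},p_{2},p_{3})=(h_{1},h_{2},h_{3})$ and $q=\mathrm{id}_{W}$, four times in succession. The triple of domain maps passes through $(\mathrm{id}_{W^{*}},h_{1},h_{2})$, then $(h_{3}^{**},\mathrm{id}_{W^{*}},h_{1})$, then $(h_{2}^{**},h_{3}^{**},\mathrm{id}_{W^{*}})$, and finally, using $(p^{*})^{*}=p^{**}$ and $(\mathrm{id}_{W^{*}})^{*}=\mathrm{id}_{W^{**}}$, back to $(h_{1}^{**},h_{2}^{**},h_{3}^{**})$ with identity codomain map; the successive codomain maps are $h_{3}^{*},h_{2}^{*},h_{1}^{*},\mathrm{id}_{W^{**}}$. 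Hence
$$f^{****}(x^{**},y^{**},z^{**})=g^{****}\left(h_{1}^{**}(x^{**}),h_{2}^{**}(y^{**}),h_{3}^{**}(z^{**})\right),$$
which is the ``only if'' half of (1). For the converse I would restrict the factorization of $f^{****}$ to the canonical embeddings $X\hookrightarrow X^{**}$, $Y\hookrightarrow Y^{**}$, $Z\hookrightarrow Z^{**}$: since $f^{****}$ and $g^{****}$ are extensions of $f$ and $g$ and $h_{i}^{**}$ extends $h_{i}$, evaluating at embedded points and using injectivity of $W\hookrightarrow W^{**}$ yields $f(x,y,z)=g(h_{1}x,h_{2}y,h_{3}z)$.

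Part (2) then follows by symmetry: from $f^{r}(z,y,x)=f(x,y,z)$ and $g^{r}(s_{3},s_{2},s_{1})=g(s_{1},s_{2},s_{3})$ one sees that $f$ factors through $g$ by $(h_{1},h_{2},h_{3})$ exactly when $f^{r}$ factors through $g^{r}$ by $(h_{3},h_{2},h_{1})$; applying part (1) to the pair $(f^{r},g^{r})$ gives $f^{r****}=g^{r****}\circ(h_{3}^{**},h_{2}^{**},h_{1}^{**})$, and flipping the outer arguments back turns this into the assertion that $f^{r****r}$ factors through $g^{r****r}$ by $(h_{1}^{**},h_{2}^{**},h_{3}^{**})$; the reverse implication is the same computation read backwards. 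The only genuinely delicate point is the index bookkeeping in iterating the one-step lemma four times — keeping track of which slot the adjoint acts on and how the codomain map is carried along — after which the argument rests only on the standard facts that $h^{**}$ and the iterated extensions of a tri-linear map agree with the originals on the canonically embedded spaces.
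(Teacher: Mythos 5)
Your argument is correct, and at its core it traces the same chain of adjoint identities as the paper: the intermediate factorizations you extract from your one-step lemma, namely $f^{*}=h_{3}^{*}\circ g^{*}\circ(\mathrm{id},h_{1},h_{2})$, then $f^{**}=h_{2}^{*}\circ g^{**}\circ(h_{3}^{**},\mathrm{id},h_{1})$, and so on, are exactly the expressions that appear as successive lines of the paper's single long computation. The organization differs in two worthwhile ways. First, the paper proves the forward implication by choosing nets $\{x_{\alpha}\},\{y_{\beta}\},\{z_{\gamma}\}$ converging weak$^{*}$ to $x^{**},y^{**},z^{**}$ and evaluating $\langle f^{****}(x^{**},y^{**},z^{**}),w^{*}\rangle$ as a triple iterated limit; your version is purely algebraic, iterating one adjoint at a time, which removes the nets entirely and makes the index bookkeeping (cyclic shift of the domain maps, codomain map becoming $p_{3}^{*}$) transparent and checkable by induction. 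Second, for part (2) the paper only says the proof is ``similar to (1),'' whereas you actually derive (2) from (1) by conjugating with the flip, using that $f$ factors through $g$ via $(h_{1},h_{2},h_{3})$ exactly when $f^{r}$ factors through $g^{r}$ via $(h_{3},h_{2},h_{1})$; that reduction is valid and saves repeating the computation. Your converse step matches the paper's: both of you read the hypothesis ``$f^{****}$ factors through $g^{****}$'' as a factorization by the second adjoints $h_{i}^{**}$ and then restrict to the canonical images of $X,Y,Z$ in their biduals, which is the intended reading of Definition \ref{5.1} in this context.
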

\begin{proof}
(1) Let $f$ factors through $g$ by  bounded linear mappings $h_{1}:X\longrightarrow S_{1}, h_{2}:Y\longrightarrow S_{2}$ and $h_{3}:Z\longrightarrow S_{3}$, then $f(x,y,z)=g(h_{1}(x),h_{2}(y),h_{3}(z))$ for every $x\in X, y\in Y$ and $z\in Z$. Let $\{x_{\alpha} \}, \{y_{\beta} \}$ and $\{z_{\gamma} \}$ be nets in $X, Y$ and $Z$  which converge to $x^{**}\in X^{**},y^{**}\in Y^{**}$ and $z^{**}\in Z^{**}$  in the $w^{*}-$topologies, respectively. Then for every $w^{*}\in W^{*}$ we have
\begin{eqnarray*}
\langle f^{****}(x^{**},y^{**},z^{**}),w^{*} \rangle&=&\lim\limits_{\alpha} \lim\limits_{\beta} \lim\limits_{\gamma}\langle w^{*},f(x_{\alpha},y_{\beta},z_{\gamma})\rangle \\
&=&\lim\limits_{\alpha} \lim\limits_{\beta} \lim\limits_{\gamma}\langle w^{*},g(h_{1}(x_{\alpha}),h_{2}(y_{\beta}),h_{3}(z_{\gamma}))\rangle\\
&=&\lim\limits_{\alpha} \lim\limits_{\beta} \lim\limits_{\gamma}\langle g^{*}(w^{*},h_{1}(x_{\alpha}),h_{2}(y_{\beta})),h_{3}(z_{\gamma})\rangle \\
&=&\lim\limits_{\alpha} \lim\limits_{\beta} \lim\limits_{\gamma}\langle h_{3}^{*}(g^{*}(w^{*},h_{1}(x_{\alpha}),h_{2}(y_{\beta}))),z_{\gamma}\rangle\\
&=&\lim\limits_{\alpha} \lim\limits_{\beta} \langle z^{**},h_{3}^{*}(g^{*}(w^{*},h_{1}(x_{\alpha}),h_{2}(y_{\beta})))\rangle \\
&=&\lim\limits_{\alpha} \lim\limits_{\beta} \langle h_{3}^{**}(z^{**}),g^{*}(w^{*},h_{1}(x_{\alpha}),h_{2}(y_{\beta}))\rangle\\
&=&\lim\limits_{\alpha} \lim\limits_{\beta} \langle g^{**}(h_{3}^{**}(z^{**}),w^{*},h_{1}(x_{\alpha})),h_{2}(y_{\beta})\rangle \\
&=&\lim\limits_{\alpha} \lim\limits_{\beta} \langle h_{2}^{*}(g^{**}(h_{3}^{**}(z^{**}),w^{*},h_{1}(x_{\alpha}))),y_{\beta}\rangle\\
&=&\lim\limits_{\alpha} \langle y^{**},h_{2}^{*}(g^{**}(h_{3}^{**}(z^{**}),w^{*},h_{1}(x_{\alpha})))\rangle \\
&=&\lim\limits_{\alpha} \langle h_{2}^{**}(y^{**}),g^{**}(h_{3}^{**}(z^{**}),w^{*},h_{1}(x_{\alpha}))\rangle \\
&=&\lim\limits_{\alpha} \langle g^{***}(h_{2}^{**}(y^{**}),h_{3}^{**}(z^{**}),w^{*}),h_{1}(x_{\alpha})\rangle \\
&=&\lim\limits_{\alpha} \langle h_{1}^{*}(g^{***}(h_{2}^{**}(y^{**}),h_{3}^{**}(z^{**}),w^{*})),x_{\alpha}\rangle\\
&=&\langle x^{**},h_{1}^{*}(g^{***}(h_{2}^{**}(y^{**}),h_{3}^{**}(z^{**}),w^{*}))\rangle \\
&=&\langle h_{1}^{**}(x^{**}),g^{***}(h_{2}^{**}(y^{**}),h_{3}^{**}(z^{**}),w^{*})\rangle\\
&=&\langle g^{****}( h_{1}^{**}(x^{**}),h_{2}^{**}(y^{**}),h_{3}^{**}(z^{**})),w^{*}\rangle.
\end{eqnarray*}
Therefore $f^{****}$ factors through $g^{****}$.

Conversely, suppose that $f^{****}$ factors through $g^{****}$, thus 
$$f^{****}(x^{**},y^{**},z^{**})= g^{****}( h_{1}^{**}(x^{**}),h_{2}^{**}(y^{**}),h_{3}^{**}(z^{**})), $$
in particular, for $x\in X, y\in Y$ and $z\in Z$ we have 
$$f^{****}(x,y,z)= g^{****}( h_{1}^{**}(x),h_{2}^{**}(y),h_{3}^{**}(z)).$$
Then for every $w^{*}\in W^{*}$ we have
\begin{eqnarray*}
\langle w^{*}&,&f(x,y,z)\rangle =\langle f^{*}(w^{*},x,y),z\rangle =\langle f^{**}(z,w^{*},x),y\rangle \\
&=&\langle f^{***}(y,z,w^{*}),x\rangle=\langle f^{****}(x,y,z),w^{*}\rangle  \\
&=&\langle g^{****}( h_{1}^{**}(x),h_{2}^{**}(y),h_{3}^{**}(z)),w^{*}\rangle =\langle h_{1}^{**}(x),g^{***}(h_{2}^{**}(y),h_{3}^{**}(z),w^{*})\rangle  \\
&=&\langle x,h_{1}^{*}(g^{***}(h_{2}^{**}(y),h_{3}^{**}(z),w^{*}))\rangle =\langle g^{***}(h_{2}^{**}(y),h_{3}^{**}(z),w^{*}),h_{1}(x)\rangle \\
&=&\langle h_{2}^{**}(y),g^{**}(h_{3}^{**}(z),w^{*},h_{1}(x))\rangle =\langle y,h_{2}^{*}(g^{**}(h_{3}^{**}(z),w^{*},h_{1}(x)))\rangle\\
&=&\langle g^{**}(h_{3}^{**}(z),w^{*},h_{1}(x)),h_{2}(y)\rangle =\langle h_{3}^{**}(z),g^{*}(w^{*},h_{1}(x),h_{2}(y))\rangle\\
&=&\langle z,h_{3}^{*}(g^{*}(w^{*},h_{1}(x),h_{2}(y)))\rangle =\langle g^{*}(w^{*},h_{1}(x),h_{2}(y)),h_{3}(z)\rangle\\
&=&\langle w^{*},g(h_{1}(x),h_{2}(y)),h_{3}(z))\rangle.
\end{eqnarray*}
It follows that $f$ factors through $g$ and this completes the proof.

(2) The proof  similar to (1).
\end{proof}

\begin{corollary}\label{5.4}
Let $f:X\times Y\times Z\longrightarrow W$ and $g:S_{1}\times S_{2}\times S_{3}\longrightarrow W$ be bounded tri-linear map and let $f$ factors through $g$. If $g$ is regular then $f$ is also regular.
\end{corollary}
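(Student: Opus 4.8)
The plan is to reduce the statement to Theorem~\ref{5.2} together with the definition of regularity, namely that a bounded tri-linear map $h$ is regular precisely when $h^{****}=h^{r****r}$. First I would fix the data: by hypothesis $f$ factors through $g$ by some bounded linear maps $h_{1}:X\longrightarrow S_{1}$, $h_{2}:Y\longrightarrow S_{2}$, $h_{3}:Z\longrightarrow S_{3}$, so that $f(x,y,z)=g(h_{1}(x),h_{2}(y),h_{3}(z))$ for all $x\in X$, $y\in Y$, $z\in Z$.

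Next I would apply both parts of Theorem~\ref{5.2}. Part~(1) gives that $f^{****}$ factors through $g^{****}$, and part~(2) gives that $f^{r****r}$ factors through $g^{r****r}$. The one point that needs a short verification is that both factorizations are implemented by the \emph{same} triple of second adjoints $h_{1}^{**},h_{2}^{**},h_{3}^{**}$. This follows because $f^{r}$ factors through $g^{r}$ via $h_{3},h_{2},h_{1}$ (directly from $f^{r}(z,y,x)=f(x,y,z)=g^{r}(h_{3}(z),h_{2}(y),h_{1}(x))$); applying Theorem~\ref{5.2}(1) to the pair $f^{r},g^{r}$ and then flipping back yields $f^{r****r}(x^{**},y^{**},z^{**})=g^{r****r}(h_{1}^{**}(x^{**}),h_{2}^{**}(y^{**}),h_{3}^{**}(z^{**}))$ for all $x^{**}\in X^{**}$, $y^{**}\in Y^{**}$, $z^{**}\in Z^{**}$, matching the triple produced by part~(1).

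Finally I would invoke the regularity of $g$, which means $g^{****}=g^{r****r}$ as maps on $S_{1}^{**}\times S_{2}^{**}\times S_{3}^{**}$. Combining the two factorizations with this equality gives, for all $x^{**}\in X^{**}$, $y^{**}\in Y^{**}$, $z^{**}\in Z^{**}$,
\begin{align*}
f^{****}(x^{**},y^{**},z^{**})&=g^{****}(h_{1}^{**}(x^{**}),h_{2}^{**}(y^{**}),h_{3}^{**}(z^{**}))\\
&=g^{r****r}(h_{1}^{**}(x^{**}),h_{2}^{**}(y^{**}),h_{3}^{**}(z^{**}))\\
&=f^{r****r}(x^{**},y^{**},z^{**}),
\end{align*}
so $f^{****}=f^{r****r}$, i.e. $f$ is regular. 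The only genuine obstacle is the bookkeeping with the flips to confirm that the two applications of Theorem~\ref{5.2} are compatible on the nose; once that is settled the argument is a one-line substitution. I do not expect to need the net-characterization of regularity from the introduction, although it gives an alternative route by pushing the iterated weak$^{*}$ limits through the continuous maps $h_{1},h_{2},h_{3}$ directly.
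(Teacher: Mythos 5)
Your proposal is correct and follows essentially the same route as the paper: apply Theorem~\ref{5.2} to both $f^{****}$ and $f^{r****r}$, use $g^{****}=g^{r****r}$, and chain the equalities. Your extra check that both factorizations are implemented by the same triple $h_{1}^{**},h_{2}^{**},h_{3}^{**}$ is a point the paper silently assumes, so including it only makes the argument more complete.
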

\begin{proof}
Let $g$ be regular then $g^{****}=g^{r****r}$. Since the $f$ factors through $g$ then  for every $x^{**}\in X^{**}, y^{**}\in Y^{**}$ and $z^{**}\in Z^{**}$ we have
\begin{eqnarray*}
f^{****}(x^{**},y^{**},z^{**})&=&g^{****}( h_{1}^{**}(x^{**}),h_{2}^{**}(y^{**}),h_{3}^{**}(z^{**}))\\
&=&g^{r****r}( h_{1}^{**}(x^{**}),h_{2}^{**}(y^{**}),h_{3}^{**}(z^{**}))\\
&=&f^{r****r}(x^{**},y^{**},z^{**}).
\end{eqnarray*}
Therefore $f^{****}=f^{r****r}$, as claimed.
\end{proof}

\section{\textbf{Approximate identity and Factorization properties}}

Let $X$ be a Banach space, $A$ and $B$ be  Banach algebras  with  bounded left approximate identitis $\{e_{\alpha}\}$ and $\{e_{\beta}\}$, respactively.  Then a bounded tri-linear mapping $K_{1}:A\times B\times X\longrightarrow X$  is said to be left approximately unital if $$w^{*}-\lim\limits_{\beta}w^{*}-\lim\limits_{\alpha} K_{1}(e_{\alpha},e_{\beta},x)=x,$$
and  $K_{1}$ is said left unital if there exists $e_{1}\in A$ and $e_{2}\in B$ such that $K_{1}(e_{1},e_{2},x)=x$,  for every $x\in X$.

Similarly, bounded tri-linear mapping $K_{2}:X\times B\times A\longrightarrow X$  is said to be right approximately unital if $$w^{*}-\lim\limits_{\beta}w^{*}-\lim\limits_{\alpha} K_{1}(x,e_{\beta},e_{\alpha})=x,$$
and $K_{2}$ is also said to be right unital if $K_{2}(x,e_{2},e_{1})=x$.

\begin{lemma}\label{6.2}
Let $X$ be a Banach space, $A$ and $B$ be Banach algebras. Then bounded tri-linear mapping
\begin{enumerate}
\item $K_{1}:A\times B\times X\longrightarrow X$ is left approximately unital if and only if $K_{1}^{r****r}:A^{**}\times B^{**}\times X^{**}\longrightarrow X^{**}$ is left unital.
\item $K_{2}:X\times B\times A\longrightarrow X$ is right approximately unital if and only if $K_{2}^{****}:X^{**}\times B^{**}\times A^{**}\longrightarrow X^{**}$ is right unital.
\end{enumerate}
\end{lemma}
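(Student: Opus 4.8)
The plan is to reduce both equivalences to the iterated--limit description of the composite adjoint maps, exactly as in the computation carried out in the proof of Theorem~\ref{5.2}, and then to transfer the identity from $X$ to $X^{**}$ by weak$^{*}$ density. I would first record, for a bounded tri-linear map $K_{1}:A\times B\times X\longrightarrow X$ and bounded nets $a_{k}\to a^{**}$, $b_{j}\to b^{**}$, $x_{i}\to x^{**}$ in the respective weak$^{*}$ topologies, the formula
\[
\langle K_{1}^{r****r}(a^{**},b^{**},x^{**}),x^{*}\rangle=\lim_{i}\lim_{j}\lim_{k}\langle x^{*},K_{1}(a_{k},b_{j},x_{i})\rangle ,
\]
in which the $X^{**}$--variable produces the outermost limit and the $A^{**}$--variable the innermost one. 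This is obtained by writing $K_{1}^{r****r}(a^{**},b^{**},x^{**})=(K_{1}^{r})^{****}(x^{**},b^{**},a^{**})$ and unravelling the four adjoints of the flip $K_{1}^{r}:X\times B\times A\longrightarrow X$ precisely as in Theorem~\ref{5.2}. The companion formula for $K_{2}^{****}$ (again the $X^{**}$--variable outermost and the $A^{**}$--variable innermost) is read off the same computation.

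For the forward implication of (1), assume $K_{1}$ is left approximately unital. Since $\{e_{\alpha}\}$ and $\{e_{\beta}\}$ are bounded we may, after passing to subnets, assume $e_{\alpha}\to e_{1}$ in $A^{**}$ and $e_{\beta}\to e_{2}$ in $B^{**}$ for the weak$^{*}$ topologies. Putting $x_{i}\equiv x\in X$ in the formula above gives, for each $x^{*}\in X^{*}$,
\[
\langle K_{1}^{r****r}(e_{1},e_{2},x),x^{*}\rangle=\lim_{\beta}\lim_{\alpha}\langle x^{*},K_{1}(e_{\alpha},e_{\beta},x)\rangle=\langle x,x^{*}\rangle ,
\]
the last equality being the hypothesis; hence $K_{1}^{r****r}(e_{1},e_{2},x)=x$ for all $x\in X$. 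Now $x^{**}\mapsto K_{1}^{r****r}(e_{1},e_{2},x^{**})$ is weak$^{*}$--to--weak$^{*}$ continuous on $X^{**}$ (it is the third--variable of the $f^{r****r}$--type extension; see item~(4) of the list following Definition~\ref{3.1}), and it coincides with the identity map, which is trivially weak$^{*}$--to--weak$^{*}$ continuous, on the weak$^{*}$--dense subset $X$ of $X^{**}$ (Goldstine). Hence the two agree on all of $X^{**}$, i.e.\ $K_{1}^{r****r}$ is left unital.

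For the converse of (1), let $e_{1}\in A^{**}$ and $e_{2}\in B^{**}$ satisfy $K_{1}^{r****r}(e_{1},e_{2},x^{**})=x^{**}$ for every $x^{**}\in X^{**}$, and choose bounded nets $a_{\delta}\to e_{1}$ in $A$ and $b_{\mu}\to e_{2}$ in $B$ weak$^{*}$ (possible by Goldstine; one takes $e_{1},e_{2}$ to be weak$^{*}$ cluster points of the prescribed approximate identities $\{e_{\alpha}\},\{e_{\beta}\}$). Restricting the formula of the first paragraph to $x\in X$ gives $w^{*}-\lim_{\mu}w^{*}-\lim_{\delta}K_{1}(a_{\delta},b_{\mu},x)=K_{1}^{r****r}(e_{1},e_{2},x)=x$, which is left approximate unitality of $K_{1}$. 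Part (2) is handled verbatim, using the iterated--limit formula for $K_{2}^{****}$, replacing ``left unital'' by ``right unital'', and invoking item~(1) of the same list for the weak$^{*}$--to--weak$^{*}$ continuity of $x^{**}\mapsto K_{2}^{****}(x^{**},e_{2},e_{1})$.

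The points I expect to require care are the bookkeeping of the three nested weak$^{*}$ limits once the flip $r$ is applied on both sides of $K_{1}^{****}$ — one must check that the $X^{**}$--variable ends up as the outermost limit and the $A^{**}$--variable as the innermost, since otherwise the iterated limit produced by $K_{1}^{r****r}$ does not line up with the one in the definition of left approximate unitality — and, in the converse, the identification of the abstract left identity $(e_{1},e_{2})$ with weak$^{*}$ cluster points of the given approximate identities. Once these are settled, the forward direction is a density/continuity argument and the converse a direct substitution, so I foresee no further obstacle.
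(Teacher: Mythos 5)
Your proposal is correct and takes essentially the same route as the paper: both reduce the claim to the iterated weak$^{*}$-limit formula for $K_{1}^{r****r}$ (with the $X^{**}$-variable outermost and the $A^{**}$-variable innermost), prove the forward direction by evaluating the triple limit, and prove the converse by direct substitution along weak$^{*}$-convergent nets. Your repackaging of the paper's outermost limit over the net $x_{\gamma}\to x^{**}$ as a weak$^{*}$-continuity-plus-Goldstine-density step is only a cosmetic difference.
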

\begin{proof}
We prove only (1), the other part have the same argument. Let $K_{1}$ be a left approximately unital. Thus  there exists bounded left approximate identitys $\{e_{\alpha}\}\subseteq A$  and $\{e_{\beta}\}\subseteq B$ shuch that $$w^{*}-\lim\limits_{\beta}w^{*}-\lim\limits_{\alpha} K_{1}(e_{\alpha},e_{\beta},x)=x,$$
for every $x\in X$. Let $\{e_{\alpha} \}$ and $\{e_{\beta} \}$ which converge to $e_{1}^{**}\in A^{**}$ and $e_{2}^{**}\in B^{**}$  in the $w^{*}-$topologies, respectively. In the other hand, for every $x^{**}\in X^{**}$, let $\{x_{\gamma} \}\subseteq X$ which converge to $x^{**}$ in the $w^{*}-$topologies, then we have
\begin{eqnarray*}
\langle K_{1}^{r****r}(e_{1}^{**},e_{2}^{**},x^{**}),x^{*}\rangle &=&\langle K_{1}^{r****}(x^{**},e_{2}^{**},e_{1}^{**}),x^{*}\rangle\\
&=&\langle x^{**}, K_{1}^{r***}(e_{2}^{**},e_{1}^{**},x^{*})\rangle\\
&=&\lim\limits_{\gamma}\langle K_{1}^{r***}(e_{2}^{**},e_{1}^{**},x^{*}),x_{\gamma}\rangle \\
&=&\lim\limits_{\gamma}\langle e_{2}^{**},K_{1}^{r**}(e_{1}^{**},x^{*},x_{\gamma})\rangle\\
& =&\lim\limits_{\gamma}\lim\limits_{\beta}\langle K_{1}^{r**}(e_{1}^{**},x^{*},x_{\gamma}),e_{\beta}\rangle \\
&=&\lim\limits_{\gamma}\lim\limits_{\beta}\langle e_{1}^{**},K_{1}^{r*}(x^{*},x_{\gamma},e_{\beta})\rangle\\
& =&\lim\limits_{\gamma}\lim\limits_{\beta}\lim\limits_{\alpha}\langle K_{1}^{r*}(x^{*},x_{\gamma},e_{\beta}),e_{\alpha}\rangle \\
&=&\lim\limits_{\gamma}\lim\limits_{\beta}\lim\limits_{\alpha}\langle x^{*},K_{1}^{r}(x_{\gamma},e_{\beta},e_{\alpha})\rangle \\
& =&\lim\limits_{\gamma}\lim\limits_{\beta}\lim\limits_{\alpha}\langle x^{*},K_{1}(e_{\alpha},e_{\beta},x_{\gamma})\rangle \\
&=&\lim\limits_{\gamma}\langle x^{*},x_{\gamma}\rangle=\langle x^{**},x^{*}\rangle.
\end{eqnarray*}
Therefore $K_{1}^{r****r}(e_{1}^{**},e_{2}^{**},x^{**})=x^{**}$. It follows that $K_{1}^{r****r}$ is left unital.

Conversely, suppose that $K_{1}^{r****r}$ is left unital. So there exists $e_{1}^{**}\in A^{**}$ and $e_{2}^{**}\in b^{**}$ shuch that $K_{1}^{r****r}(e_{1}^{**},e_{2}^{**},x^{**})=x^{**}$ for every $x^{**}\in X^{**}$. Now let $\{e_{\alpha} \}, \{e_{\beta} \}$ and $\{x_{\gamma} \}$ be nets in $A, B$ and $X$  which converge to $e_{1}^{**}, e_{2}^{**}$ and $x^{**}$  in the $w^{*}-$topologies, respectively. Thus 
\begin{eqnarray*}
w^{*}-\lim\limits_{\gamma}w^{*}-\lim\limits_{\beta}w^{*}-\lim\limits_{\alpha} K_{1}(e_{\alpha},e_{\beta},x_{\gamma})&=&K_{1}^{r****r}(e_{1}^{**}, e_{2}^{**},x^{**}) \\
&=&x^{**}=w^{*}-\lim\limits_{\gamma} x_{\gamma}.
\end{eqnarray*}
 Therefore $K_{1}$ is left approximately unital and this completes the proof.
\end{proof}
\begin{remark}\label{6.3}
It should be remarked that in contrast to the situation occurring for $K_{1}^{r****r}$ and $K_{2}^{****}$ in the above lemma, $K_{1}^{****}$ and $K_{2}^{r****r}$ are not necessarily left and right unital respectively, in general.
\end{remark}
\begin{theorem}\label{6.4}
Suppos $X, S$ are Banach spaces  and $A, B$ are Banach algebras.
\begin{enumerate}
\item Let $K_{1}:A\times B\times X\longrightarrow X$ be left approximately unital and factors through $ g_{r}:A\times B\times S\longrightarrow X$ from rigth by $h:X\longrightarrow S$. If $h$ is weakly compactenss, then $X$ is reflexive.
\item Let $K_{2}:X\times B\times A\longrightarrow X$ be right approximately unital and factors through $ g_{l}:S\times B\times A\longrightarrow X$ from left by $h:X\longrightarrow S$. If $h$ is weakly compactenss, then $X$ is reflexive.
\end{enumerate}
\end{theorem}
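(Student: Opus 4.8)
The plan is to reduce both parts to the assertion that the identity operator on $X^{**}$ is weakly compact, which forces $X^{**}$, and hence $X$, to be reflexive. I treat part (1) in detail; part (2) is symmetric.

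First I would feed the hypotheses into the two structural results already available. Since $K_{1}$ is left approximately unital, Lemma~\ref{6.2}(1) produces $e_{1}^{**}\in A^{**}$ and $e_{2}^{**}\in B^{**}$ with $K_{1}^{r****r}(e_{1}^{**},e_{2}^{**},x^{**})=x^{**}$ for every $x^{**}\in X^{**}$. Next, the statement ``$K_{1}$ factors through $g_{r}$ from the right by $h$'' is exactly factorization in the sense of Definition~\ref{5.1} with linking maps $\mathrm{id}_{A}$, $\mathrm{id}_{B}$, $h$; hence Theorem~\ref{5.2}(2) (in the explicit form supplied by its proof) gives
$$K_{1}^{r****r}(a^{**},b^{**},x^{**})=g_{r}^{r****r}\bigl(a^{**},b^{**},h^{**}(x^{**})\bigr)$$
for all $a^{**}\in A^{**}$, $b^{**}\in B^{**}$, $x^{**}\in X^{**}$. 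Putting $a^{**}=e_{1}^{**}$, $b^{**}=e_{2}^{**}$ and using the previous identity yields $g_{r}^{r****r}(e_{1}^{**},e_{2}^{**},h^{**}(x^{**}))=x^{**}$ for every $x^{**}\in X^{**}$.

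Now I would introduce the bounded linear operator $R\colon S^{**}\to X^{**}$, $R(s^{**})=g_{r}^{r****r}(e_{1}^{**},e_{2}^{**},s^{**})$ (bounded because $g_{r}^{r****r}$ is a bounded tri-linear map with its first two slots frozen). The last display says precisely $R\circ h^{**}=\mathrm{id}_{X^{**}}$. Since $h$ is weakly compact, Gantmacher's theorem shows $h^{*}$, hence $h^{**}=(h^{*})^{*}$, is weakly compact, and composing it with the bounded operator $R$ keeps it weakly compact; so $\mathrm{id}_{X^{**}}=R\circ h^{**}$ is weakly compact. By Kakutani's theorem $B_{X^{**}}$ is then weakly compact, i.e. $X^{**}$ is reflexive, and since a Banach space is reflexive exactly when its bidual is, $X$ is reflexive. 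For part (2) one argues identically: Lemma~\ref{6.2}(2) gives a right unit $(e_{2}^{**},e_{1}^{**})$ for $K_{2}^{****}$, Theorem~\ref{5.2}(1) gives $K_{2}^{****}(x^{**},b^{**},a^{**})=g_{l}^{****}(h^{**}(x^{**}),b^{**},a^{**})$, and $s^{**}\mapsto g_{l}^{****}(s^{**},e_{2}^{**},e_{1}^{**})$ is a left inverse of the weakly compact operator $h^{**}$.

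I do not expect a genuine obstacle: the whole content lies in invoking Lemma~\ref{6.2} and Theorem~\ref{5.2} correctly. The one point to be careful about is that the factorization of the extensions is via the bitransposes $h^{**}$ of the linking maps --- this is exactly what the computation in the proof of Theorem~\ref{5.2} establishes --- together with the standard fact that bitransposing preserves weak compactness, which is what makes $R\circ h^{**}$ weakly compact.
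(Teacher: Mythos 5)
Your proposal is correct and follows essentially the same route as the paper: Lemma~\ref{6.2} supplies the unit $(e_{1}^{**},e_{2}^{**})$ for $K_{1}^{r****r}$, Theorem~\ref{5.2} transfers the factorization to the extensions via $h^{**}$, and Gantmacher's theorem converts weak compactness of $h$ into the key property of its adjoints. The only cosmetic difference is the finish: you conclude at the level of $X^{**}$ by observing that $\mathrm{id}_{X^{**}}=R\circ h^{**}$ is weakly compact, whereas the paper takes one more adjoint to get $x^{***}=h^{***}(g_{r}^{r****r*}(x^{***},e_{1}^{**},e_{2}^{**}))$ and uses $h^{***}(S^{***})\subseteq X^{*}$ to deduce that $X^{*}$ is reflexive --- the two arguments are dual formulations of the same idea.
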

\begin{proof}
We only give a proof for (1). Since $K_{1}$ is left approximately unital, thus there exists $e_{1}^{**}\in A^{**}$ and $e_{2}^{**}\in B^{**}$ shut that 
$$K_{1}^{r****r}(e_{1}^{**},e_{2}^{**},x^{**})=x^{**},$$
for every $x^{**}\in X^{**}$. In the other hand, the bounded tri-linear mapping $K_{1}$ factors through $ g_{r}$ from rigth, so Theorem \ref{5.2} follows that $K_{1}^{r****r}$ factors through $g_{r}^{r****r}$ from rigth. In other words 
$$K_{1}^{r****r}(e_{1}^{**},e_{2}^{**},x^{**})=g_{r}^{r****r}(e_{1}^{**},e_{2}^{**},h_{3}^{**}(x^{**})).$$ Then for every $x^{***}\in X^{***}$ we have
\begin{eqnarray*}
\langle x^{***},x^{**}\rangle &=&\langle x^{***},K_{1}^{r****r}(e_{1}^{**},e_{2}^{**},x^{**})\rangle\\
&=&\langle x^{***},g_{r}^{r****r}(e_{1}^{**},e_{2}^{**},h^{**}(x^{**}))\rangle\\
&=&\langle g_{r}^{r****r*}(x^{***},e_{1}^{**},e_{2}^{**}),h^{**}(x^{**})\rangle \\
&=&\langle h^{***}(g_{r}^{r****r*}(x^{***},e_{1}^{**},e_{2}^{**})),x^{**}\rangle.
\end{eqnarray*}
Therefore $x^{***}=h^{***}(g_{r}^{r****r*}(x^{***},e_{1}^{**},e_{2}^{**}))$. The weak compactness of $h$ implies that of $h^{*}$ from which we have $h^{***}(S^{***})\subseteq X^{*}$. In particular 
$$h^{***}(g_{r}^{r****r*}(x^{***},e_{1}^{**},e_{2}^{**}))\subseteq X^{*},$$
that is, $X^{*}$ is reflexive. So $X$ is reflexive.
\end{proof}




\end{document}